\begin{document}

\title[CR embeddings into low dimensional spheres]{On CR embeddings of strictly pseudoconvex hypersurfaces into spheres in low dimensions}
\author{Peter Ebenfelt}
\address{Department of Mathematics, University of California at San Diego, La Jolla, CA 92093-0112}
\email{pebenfel@math.ucsd.edu}
\author{Andr\'e Minor}
\address{Department of Mathematics, University of California at San Diego, La Jolla, CA 92093-0112}
\email{aminor@math.ucsd.edu}
\thanks{The authors were supported in part by DMS-1001322.}

\thanks{2000 {\em Mathematics Subject Classification}. 32H02, 32V30}

\newtheorem{prop}{Proposition}
\newtheorem{lem}{Lemma}
\newtheorem{thm}{Theorem}
\newtheorem{cor}{Corollary}
\newtheorem{ex}{Example}
\newtheorem{defi}{Definition}
\numberwithin{equation}{section}
\numberwithin{lem}{section}
\numberwithin{prop}{section}
\numberwithin{ex}{section}

\newcommand{\C}{\mathbb C}
\newcommand{\R}{\mathbb R}
\newcommand{\bC}{\mathbb C}
\newcommand{\bR}{\mathbb R}
\newcommand{\bP}{\mathbb P}
\newcommand{\bS}{\mathbb S}
\newcommand{\db}{|\!|}

\newcommand \re{\text{Re}\,}
\newcommand \im{\text{Im}\,}
\newcommand \Rk{\text{Rk}\,}
\newcommand \tr{\text{tr}\,}
\newcommand \Aut{\text{Aut}\,}

\abstract It follows from the 2004 work of the first author, X.Huang, and D. Zaitsev that any local CR embedding $f$ of a strictly psedoconvex hypersurface $M^{2n+1}\subset\bC^{n+1}$ into the sphere $\bS^{2N+1}\subset \bC^{N+1}$ is rigid, i.e.\ any other such local embedding is obtained from $f$ by composition by an automorphism of the target sphere $\bS^{2N+1}$, {\it provided} that the codimension $N-n<n/2$. In this paper, we consider the limit case $N-n=n/2$ in the simplest situation where $n=2$, i.e.\ we consider local CR embeddings $f\colon M^5\to \bS^7$. We show that there are at most two different local embeddings, up to composition with an automorphism of $\bS^7$. We also identify a subclass of 5-dimensional, strictly pseudoconvex hypersurfaces $M^5$ in terms of their CR curvatures such that rigidity holds for local CR embeddings $f\colon M^5\to \bS^7$.
\endabstract

\maketitle

\section{Introduction}

In the theory of Levi nondegenerate (real) hypersurfaces $M=M^{2n+1}$ in $\bC^{n+1}$, the role of flat models is played by the nondegenerate hyperquadrics (\cite{CM74}). In particular, in the strictly pseudoconvex case, the flat model is the (unit) sphere $\bS=\bS^{2n+1}\subset \bC^{n+1}$, given by
$$
\sum_{k=1}^{n+1}|z_k|^2 = 1.
$$
In recent years, great effort has being dedicated to the study of existence and uniqueness/classification of transversal CR mappings sending a Levi nondegenerate hypersurface $M\subset \bC^{n+1}$ into the corresponding flat model in a higher dimensional complex space $\bC^{N+1}$. The existence problem seems very difficult and delicate in general, and will not be addressed here; the reader is referred to e.g. \cite{Webster78b}, \cite{Forstneric86}, \cite{Faran88}, \cite{Lempert90}, and \cite{KimOh09}, \cite{JPD11} for a sampling of results in this direction. In the case where $M$ is the sphere $\bS^{2n+1}$ itself, the existence problem is of course trivial, but the uniqueness/classification problem has attracted much attention. Here, uniqueness/classicication has to be understood in the appropriate way. Given any CR mapping $f\colon \bS^{2n+1}\to \bS^{2N+1}$, automorphisms $R$ and $T$ of $\bS^{2n+1}$ and $\bS^{2N+1}$, respectively, the mapping $g:=T\circ f\circ R$ is a seemingly "different" mapping, but contains no interesting new information since the automorphism group of the sphere is well understood. (The automorphism group of $\bS^{2N+1}$ consists of global automorphisms of complex projective space $\bP^{N+1}$, and is isomorphism to $SU(N+1,1)$.) The classification problem consists of classifying mappings up to the equivalence relation given by this left/right action of the automorphism groups of the target and source spheres. There is a vast literature on this topic. Mappings between spheres in general are not the main focus of this paper and, therefore, we only refer the reader to the fairly recent publications \cite{HuangJiXu06}, \cite{JDPLeblPeters07} for more details and references.

A CR mapping, $f_0\colon M^{2n+1} \to \bS^{2N_0+1}$, of a strictly pseudoconvex hypersurface $M\subset\bC^{n+1}$ into the sphere $\bS\subset\bC^{N_0+1}$ is said to be {\it rigid} in a some codimensional range $N-n<r$ if any other CR mapping $f\colon M^{2n+1} \to \bS^{2N+1}$ with $N-n<r$ is obtained as $f=T\circ L\circ f_0$, where $T$ is  some automorphism of the target sphere $\bS^{2N+1}$ and $L$ denotes the linear embedding of $\bS^{2N_0+1}$ into a subspace section of $\bS^{2N+1}$. We shall say that rigidity holds (in a given codimension range) if there exists a rigid mapping (for this range). If $M=\bS^{2n+1}$, then rigidity turns out to be equivalent to the statement that the obvious linear embedding is the only non-constant CR mapping up to the equivalence described in the previous paragraph. In this paper, we shall consider the situation where rigidity potentially breaks down for non-constant CR mappings $f\colon M^{2n+1} \to \bS^{2N+1}$.  As will be explained below, in the case where $M=\bS^{2n+1}$ this situation is understood due to the work of X. Huang and S. Ji (\cite{HuangJi01}). To date, however, nothing seems to be known in the corresponding situation for more general $M$. As a modest first step, we shall investigate the simplest case in this note. We shall now explain this more precisely.

Let $M$ be a smooth, connected, strictly pseudoconvex hypersurface in $\bC^{n+1}$. An invariant of $M$ that can be used to determine a range of codimensions ($N-n$) for which local CR embeddings into the sphere $\bS^{2N+1}\subset\bC^{N+1}$ are rigid is its {\it CR complexity} (introduced in \cite{ESh10}; see also \cite{BEH08})
\begin{equation}\label{CRcomplexity}
\mu(M):=\min\{N_0-n\colon \exists  f_0\colon M\to \bS^{2N_0+1}\subset \bC^{N_0+1}\ \text{{\rm with $f_0$ locally a CR embedding.}}\},
\end{equation}
where we have to allow $\mu(M)$ to potentially take the value $\infty$ (meaning that $M$ cannot be locally embedded into $\bS^{2N_0+1}\subset \bC^{N_0+1}$ for any $N_0$; see \cite{Forstneric86}, \cite{Faran88}, \cite{Lempert90}). We remark that a CR mapping of $M$ into the sphere (or any strictly pseudoconvex hypersurface) is a transversal local embedding if and only if it is non-constant and, hence, it suffices to consider $f_0$ that are non-constant in \eqref{CRcomplexity}. We note that the prototype for $M$ with $\mu(M)=0$ is the sphere (and any $M$ with $\mu(M)=0$ is locally spherical, i.e.\ locally biholomorphic to the sphere). An example of $M$ with $\mu(M)=1$ is given by any (non-spherical) real ellipsoid; see Section \ref{Examples} for further examples.

Our starting point is the following rigidity result due to the first author, X. Huang, and D. Zaitsev:

\begin{thm}\label{Rigid1}  {\rm (\cite{EHZ04}, \cite{EHZ05})} Let $M$ be a smooth, connected, strictly pseudoconvex hypersurface in $\bC^{n+1}$ with CR complexity $\mu(M)=N_0-n$. If $f\colon M\to \bS^{2N+1}\subset \bC^{N+1}$ (necessarily with $N\geq N_0$) is a non-constant CR mapping such that
\begin{equation}\label{codimension}
N-n+\mu(M)<n
\end{equation}
then $f=T\circ L\circ f_0$, where $f_0$ is any CR mapping in the definition \eqref{CRcomplexity} of $\mu(M)$, $L\colon \bC^{N_0+1}\to \bC^{N+1}$ is a linear embedding sending $\bS^{2N_0+1}$ into a subspace section of $\bS^{2N+1}$, and $T\colon \bP^{N+1}\to\bP^{N+1}$ is an automorphism of $\bS^{2N+1}$.
\end{thm}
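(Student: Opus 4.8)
The plan is to work in Chern--Moser normal coordinates and to compare the (convergent) power series of $f$ with that of $L\circ f_0$ at a point, order by order in the weighted sense, using a hierarchy of Gauss-type identities together with Huang's degeneracy lemma. Fix $p\in M$ and choose Chern--Moser normal coordinates $(z,w)\in\C^n\times\C$ for $M$ at $p$, so that $M$ is $\{\im w=|z|^2+N_{22}(z,\bar z)+(\text{weight}\ge 5)\}$, the weight-$4$ term $S:=N_{22}$ being the basic CR curvature tensor of $M$ at $p$. Since CR maps into spheres are real-analytic (reflection principle), $f$ is given near $p$ by a convergent series. First I would invoke the partial normalization of CR embeddings into spheres: after composing $f$ with an automorphism $T$ of $\bS^{2N+1}$, one may assume that in suitable normal coordinates $(z^*,\zeta^*,w^*)\in\C^n\times\C^{N-n}\times\C$ on the target sphere the map has the form $z^*=z+O(\text{weight}\ge 2)$, $w^*=w+O(\text{weight}\ge 3)$, $\zeta^*=\phi(z,w)=O(\text{weight}\ge 2)$, and that, once $\phi$ is prescribed, the requirement that $f$ send $M$ into $\bS^{2N+1}$ determines $f$ completely. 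The same normalization is applied to $L\circ f_0$, producing a map with codimensional component $\phi^0$ taking values in the first $\mu(M)=N_0-n$ of the $\zeta^*$-slots.

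The second step is to extract, from the identity ``$f$ maps $M$ into $\bS^{2N+1}$'', the hierarchy of Gauss--Codazzi-type equations: substituting the normalized expansion of $f$ into $\im w^*=|z^*|^2+|\zeta^*|^2$ and restricting to $M$, comparison of weighted-degree-$k$ terms yields at weight $4$ an identity of the schematic form $S(z,\bar z)=\langle\phi_2(z),\overline{\phi_2(z)}\rangle$, where $\phi_2$ is the weight-$2$ (quadratic in $z$) part of $\phi$, i.e.\ the CR second fundamental form of $f$; and at each weight $k\ge 5$ an analogous identity exhibiting a quantity built from $S$ and the previously determined jets of $\phi$ as a Hermitian sum of squares in the weight-$(k-2)$ part $\phi_{k-2}$ of $\phi$. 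Running the identical computation for $L\circ f_0$ gives the same left-hand sides with $\phi_j$ replaced by $\phi^0_j$. The crucial structural input is that, by the very definition \eqref{CRcomplexity} of $\mu(M)$, the tuple $\phi^0_2$ already ``uses'' $\mu(M)=N_0-n$ independent slots and no CR embedding of $M$ into a sphere can do with fewer; more generally the minimality of $f_0$ controls the number of essential components in the $f_0$-side identities.

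The third step is the inductive comparison. Subtracting the weight-$k$ identity for $f$ from that for $L\circ f_0$, and using the induction hypothesis that the lower-order jets already agree, one is left with a Hermitian identity $\langle\phi_{k-2},\overline{\phi_{k-2}}\rangle=\langle\phi^0_{k-2},\overline{\phi^0_{k-2}}\rangle$. The number of holomorphic components that can occur in such an identity is at most $(N-n)+\mu(M)$, which by hypothesis \eqref{codimension} is strictly less than $n$; this is precisely the range in which Huang's degeneracy lemma applies and forces $\phi_{k-2}$ to equal $(\phi^0_{k-2},0,\dots,0)$ up to a unitary rotation of the $\zeta^*$-space. At the base step $k=4$ this residual unitary is absorbed into a further automorphism of $\bS^{2N+1}$ and is thereafter frozen; at all later steps the rotation is then forced to be the identity. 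Feeding this back into the recursion that reconstructs $f$ from $\phi$ shows that $f$ and $L\circ f_0$ have the same normalized power series at $p$, hence coincide near $p$; since $M$ is connected and both maps are real-analytic, unique continuation gives $f=T\circ L\circ f_0$ on all of $M$.

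I expect the main obstacle to be the bookkeeping in the third step: one must check that at \emph{every} weight the relevant Hermitian sum-of-squares identity genuinely involves only $(N-n)+\mu(M)$ essential holomorphic components, so that the Huang bound $(N-n)+\mu(M)<n$ is never exceeded, and that the unitary ambiguity delivered by Huang's lemma can be fixed once and for all at the base step while remaining consistent at higher orders. A secondary technical point is verifying, from the explicit structure of the Chern--Moser normal form and of the partial normalization of sphere maps, that the weight-$k$ Gauss equation really does pin down $\phi_{k-2}$ up to exactly that unitary indeterminacy and nothing more.
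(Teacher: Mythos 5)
First, note that the statement you are proving is not proved in this paper at all: Theorem \ref{Rigid1} is quoted from \cite{EHZ04}, \cite{EHZ05}, where the argument is carried out in the Chern--Moser--Webster moving-frames formalism, not in normal coordinates. There one takes adapted coframes, writes the Gauss equation for the CR second fundamental forms of $f$ and of $f_0$, subtracts so that the curvature cancels, and applies a Huang-type lemma on Hermitian forms under exactly your count $(N-n)+(N_0-n)<n$; then a separate reduction theorem handles the degenerate directions of the second fundamental form (this is where the linear embedding $L$ in the conclusion comes from), and finally the Chern--Moser parallelism uniqueness theorem (quoted in this paper as Theorem \ref{ehz61}) upgrades equality of second fundamental forms to $f=T\circ L\circ f_0$. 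Your weight-$4$ step is the same key mechanism as in that proof, but your outline replaces the last two ingredients by assertions.

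The genuine gaps are these. (a) The claim that, after a partial normalization, ``once $\phi$ is prescribed, the requirement that $f$ send $M$ into $\bS^{2N+1}$ determines $f$ completely'' is unproven, and it is essentially the entire uniqueness theorem: knowing $|\phi|^2=|\phi^0|^2$ on $M$ does not, by any elementary normalization, determine the tangential components of the map, and in \cite{EHZ04} this step is Theorem 6.1 (Theorem \ref{ehz61} here), which in addition requires the second fundamental form to be \emph{nondegenerate}; when it is degenerate one must first prove that $f(M)$ lies in a plane section of $\bS^{2N+1}$ and reduce $N$. Relatedly, your ``by the very definition \eqref{CRcomplexity} of $\mu(M)$, $\phi^0_2$ already uses $\mu(M)$ independent slots'' is not a definition-level fact; nondegeneracy of the minimal embedding's second fundamental form is itself a consequence of that reduction theorem. (b) The induction on weight does not close as stated: at weight $k\geq 5$, subtracting the two expansions of $\im w^*-|z^*|^2-|\zeta^*|^2$ produces not a clean identity $\langle\phi_{k-2},\overline{\phi_{k-2}}\rangle=\langle\phi^0_{k-2},\overline{\phi^0_{k-2}}\rangle$ modulo $\db\zeta\db^2$, but one that also involves cross terms such as $2\re\langle\phi_2,\overline{\phi_{k-2}}\rangle$ and, crucially, the differences of the tangential components $(z^*,w^*)$ of the two maps at intermediate weights, which you may not assume equal without already having the uniqueness you are trying to establish. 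For source equal to the sphere, Huang \cite{Huang99} and Huang--Ji \cite{HuangJi01} control these tangential terms by explicit normalization lemmas special to the Heisenberg hypersurface; no such lemmas are available for a general curved $M$, and supplying a substitute is precisely the hard content of \cite{EHZ04}, \cite{EHZ05}. So your proposal isolates the correct codimension-counting idea (Huang's lemma applied to the subtracted Gauss equation under \eqref{codimension}), but as written it is not a proof.
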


We should point out that in the special case where $M$ is the sphere $\bS^{2n+1}$, so that $\mu(M)=0$, the codimensional restriction \eqref{codimension} reads $N-n<n$. In this case, Theorem \ref{Rigid1} was first proved by Faran \cite{Faran86} assuming that the mapping $f$ is real-analytic; the smoothness requirement on $f$ was subsequently lowered by several authors and the strongest result in this direction ($C^2$-smoothness suffices) is due to X. Huang \cite{Huang99}. Earlier versions of Theorem \ref{Rigid1} were proved by Webster \cite{Webster79} in the special case $N=n+1$, with $n\geq 2$ in the case where $M=\bS^{2n+1}$ and $n\geq 3$ (with no reference to the CR complexity) in the general case. In 1982, Faran \cite{Faran82} showed that Webster's condition $n\geq 2$ in the sphere case is sharp by showing that there are precisely 4 non-equivalent (non-constant) mappings sending $\bS^3\subset \bC^2$ into $\bS^5\subset \bC^{3}$ (i.e. $n=1$, $N=2$). One of these mappings (the Whitney map) generalizes to show that the condition $N-n<n$ in the spherical case ($\mu(M)=0$) in Theorem \ref{Rigid1} is also sharp: there are at least 2 non-equivalent maps (the linear map and the Whitney map) $\bS^{2n+1}\to \bS^{2N+1}$ when $N-n=n$. In 2001, X. Huang and S. Ji \cite{HuangJi01} showed that these are the only two mappings up to equivalence (in fact, up to composition with an automorphism of the target sphere). The purpose of this note is to extend the result in \cite{HuangJi01} to the more general setting of Theorem \ref{Rigid1} in the simplest possible case $n=2$. More precisely, we wish to consider the situation where $M$ is a strictly pseudoconvex hypersurface in $\bC^3$ ($n=2$) and $f\colon M\to \bS^{2N+1}\subset \bC^{N+1}$ is a non-constant CR mapping such that $N-n+\mu(M)=n$, i.e.\ $N-2+\mu(M)=2$ or, equivalently, $N+\mu(M)=4$. Since $\mu(M)=N_0-n\leq N-n$ by definition, there are only two cases: (1) $\mu(M)=0$ and $N=4$, in which case the result in \cite{HuangJi01} asserts that there are precisely 2 mappings up to composition with an automorphism of the target sphere, and (2) $\mu (M)=1$ and $N=3$. In this note, we shall show that also in the case (2) there are at most 2 mappings up to composition with an automorphism of the target sphere:

\begin{thm}\label{Main} Let $M$ be a smooth, connected, strictly pseudoconvex hypersurface in $\bC^{3}$. Then there are at most two non-constant CR mappings into $\bS^{7}\subset \bC^4$ up to composition with an automorphism of $\bS^7$; i.e.\ if $f_1\colon M\to \bS^{7}\subset \bC^{4}$, $f_2\colon M\to \bS^{7}\subset \bC^{4}$, and $f_3\colon M\to \bS^{7}\subset \bC^{4}$ are non-constant CR mappings, then there is an automorphism $T$ of $\bS^7$ and $j\neq k$ with $j,k\in \{1,2,3\}$ such that $f_k=T\circ f_j$.
\end{thm}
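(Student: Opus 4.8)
The plan is to reduce the problem to a normalization statement about the ``degenerate'' part of a CR embedding $f\colon M^5\to\bS^7$ and then invoke the structure theory behind Theorem \ref{Rigid1}. First I would recall the Chern--Moser / Webster machinery: after choosing Chern--Moser normal coordinates for $M$ near a point $p$, any transversal CR map $f$ into a sphere induces, via the second fundamental form of the associated CR-flat immersion, a ``geometric rank'' invariant $\kappa(f)\in\{0,1,\dots\}$ measuring the failure of $f$ to be linear. The results of \cite{EHZ04}, \cite{EHZ05} say precisely that if $\kappa(f)<$ (the bound implicit in \eqref{codimension}) then $f$ is linear up to automorphisms; in our situation $n=2$, $N=3$, $\mu(M)=1$, the borderline case forces $\kappa(f)\in\{0,1\}$. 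The value $\kappa(f)=0$ gives, by the rigidity theorem applied to the embedding $f_0\colon M\to\bS^5$, the ``standard'' map $L\circ f_0$; so the real content is to show there is essentially one normal form when $\kappa(f)=1$.

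The key steps, in order, would be: (i) fix $f_0\colon M\to\bS^5$ realizing $\mu(M)=1$, so that any $f\colon M\to\bS^7$ factors (formally, via the geometry) through a comparison with $L\circ f_0$; (ii) write down, in Chern--Moser normal coordinates on $M$ and the standard coordinates on $\bS^7$, the differential-equations satisfied by the components of $f$ — these are the same ``CR Gauss equations'' used by Huang--Ji \cite{HuangJi01} but now with the non-flat curvature of $M$ entering as an inhomogeneous term; (iii) analyze the degeneracy locus and show, exactly as in \cite{HuangJi01}, that on a dense open set the map has geometric rank $\le 1$ and, where the rank is exactly $1$, the equations can be integrated and the solution is, after composing with an automorphism $T$ of $\bS^7$, uniquely determined up to a one-parameter ambiguity that is itself absorbed by $\Aut(\bS^7)$; (iv) conclude that up to $\Aut(\bS^7)$ there are at most the two maps $L\circ f_0$ (rank $0$) and the ``Whitney-type'' extension of $f_0$ (rank $1$), hence among any three maps $f_1,f_2,f_3$ two must be $\Aut(\bS^7)$-equivalent.

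The main obstacle I anticipate is step (iii): in Huang--Ji the source is the sphere, so the structure equations are homogeneous and one can use the explicit $SU(3,1)$-action and a clean induction on the degeneracy rank; here $M$ is an arbitrary strictly pseudoconvex hypersurface with nontrivial Chern--Moser curvature tensor $S$, and one must show that the curvature of $M$ does not obstruct the integration of the geometric-rank-one equations and does not create additional moduli. Concretely, one has to show that the partial linearization forced by $\kappa(f)=1$ is compatible with the nonlinear terms coming from $S$, i.e.\ that the ``error'' $f - (L\circ f_0$ twisted by a Whitney factor$)$ is killed by the normalization freedom in $\Aut(\bS^7)$; this is where the hypothesis $\mu(M)=1$ (as opposed to $\mu(M)=0$) is used in an essential way, since it pins down the geometry of the ambient embedding $f_0$ enough to leave only a finite-dimensional normalization problem. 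A secondary, more technical point is handling the locus where the geometric rank drops from $1$ to $0$: there one argues by continuity/real-analyticity of the relevant CR-invariant quantities (or by the jet-determination results underlying Theorem \ref{Rigid1}) that the two pieces patch to a single global normal form on the connected $M$.
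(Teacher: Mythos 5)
Your overall strategy --- classify the maps themselves by integrating the structure equations \`a la Huang--Ji, with the two candidates being $L\circ f_0$ and a ``Whitney-type extension'' of $f_0$ --- has a genuine gap at its core. In the borderline codimension $N-n+\mu(M)=n$ the rigidity theorem (Theorem \ref{Rigid1}) is precisely what is unavailable, so there is no a priori relation between an arbitrary non-constant $f\colon M\to\bS^7$ and a fixed minimal embedding $f_0\colon M\to\bS^5$; your steps (i) and (iv) presuppose such a relation. Moreover, the existence of a ``Whitney-type'' second map is not known for non-spherical $M$ (the paper explicitly records that no examples with two inequivalent maps are known in this setting), so an argument that exhibits the two normal forms and then matches an arbitrary $f$ to one of them cannot be completed as described. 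Your step (iii) is where this surfaces: you must show that the Chern--Moser curvature $S$ of $M$ ``does not create additional moduli'' in the rank-one integration, but no mechanism is offered, and the Huang--Ji induction on geometric rank uses the homogeneity of the sphere source (the explicit automorphism group action) in an essential way. Note also that if the second fundamental form vanishes identically on an open set, the Fischer decomposition forces $S\equiv 0$ there, i.e.\ $M$ is spherical; so for $\mu(M)=1$ your ``rank $0$'' alternative does not occur in the form you describe.

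What is missing, and what the paper actually does, is a direct count of solutions of the CR Gauss equation with the curvature term kept in. In codimension one the second fundamental form is a single quadratic $\omega(\zeta)$, and the Gauss equation reads $S(\zeta,\bar\zeta)+\zeta A\zeta^\ast\db\zeta\db^2=-|\omega(\zeta)|^2$ for some Hermitian $2\times 2$ matrix $A$. After a unitary normalization of $S$ (making $c=0$ in \eqref{abccond}) this becomes $ZT_AZ^\ast=-|\omega(\zeta)|^2$, and a nonzero $\omega$ solves it if and only if $T_A$ is negative semidefinite of rank one; an elementary case analysis in $(a,b,\tau,\rho,\sigma)$ shows there are at most two such $A$, hence at most two possible second fundamental forms up to a unimodular factor (Proposition \ref{Gauss-prop}). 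The theorem then follows by pigeonhole: among $f_1,f_2,f_3$ two must have the same (nonzero, hence nondegenerate) second fundamental form in adapted coframes, and Theorem \ref{ehz61} from \cite{EHZ04}, followed by the argument of Section 8 there, upgrades equality of second fundamental forms to $f_k=T\circ f_j$; the degenerate case is handled by Theorem \ref{Rigid1} plus unique continuation. In particular only ``at most two'' is proved --- no normal form for a putative second map is claimed or needed --- whereas your outline would require a full classification, which is strictly harder and not established by the steps you list.
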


By combining Theorem \ref{Rigid1}, the result in \cite{HuangJi01}, and the main new result in this paper, we may summarize the situation for $n=2$ as follows:

\begin{cor}\label{Main0} Let $M$ be a smooth, connected, strictly pseudoconvex hypersurface in $\bC^{3}$ with CR complexity $\mu(M)\leq 1$. Then, for
\begin{equation}\label{codimension3/2}
N+\mu(M)< 4,
\end{equation}
there is only one non-constant CR mapping $f\colon M\to \bS^{2N+1}\subset \bC^{N+1}$ up to composition with an automorphism of the target sphere $\bS^{2N+1}$, and, for
\begin{equation}\label{codimension2}
N+\mu(M)= 4,
\end{equation}
there are at most two non-constant CR mappings $f\colon M\to \bS^{2N+1}\subset \bC^{N+1}$ up to composition with an automorphism of the target sphere $\bS^{2N+1}$.
\end{cor}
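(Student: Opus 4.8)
Here is how I would approach the proof.

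\smallskip

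First I would dispose of the degenerate cases. Since $n=2$ and a non-constant CR map into a strictly pseudoconvex hypersurface is a transversal local embedding, any non-constant CR map $f\colon M\to\bS^7\subset\bC^4$ gives a local CR embedding of $M$ into $\bS^7$, whence $\mu(M)\le N-n=1$; in particular, if $\mu(M)\ge 2$ there are no such maps and the conclusion is vacuous. If $\mu(M)=0$, then $M$ is locally spherical and Theorem \ref{Rigid1} applies with $N-n+\mu(M)=1<2=n$, so there is exactly one non-constant CR map up to composition with an automorphism of $\bS^7$ and ``at most two'' holds trivially. So it suffices to treat $\mu(M)=1$, in which case $\bS^7$ is the \emph{minimal} sphere into which $M$ embeds; fix the germ at a generic point $p\in M$ of a CR embedding $g_0\colon M\to\bS^7$, giving one equivalence class. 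Since a non-constant CR map $M\to\bS^7$ is determined on the connected $M$ by its germ at $p$ (unique continuation for such maps), it is enough to bound by two the number of germs at $p$ of non-constant CR maps $M\to\bS^7$ modulo $\Aut\bS^7$.

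\smallskip

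The key point is that the target $\bS^7$ is CR-flat and the CR normal bundle of any such embedding is a \emph{line} bundle (complex rank $N-n=1$), so the CR second fundamental form is tightly controlled by the fixed Chern--Moser curvature of $M$. For a non-constant CR map $f\colon M\to\bS^7$, write its CR second fundamental form as $II_f=\sigma_f\otimes e_f$, with $e_f$ a unit section of the normal line bundle and $\sigma_f$ a symmetric $\bC$-valued form on $T^{1,0}M$ (a symmetric $2\times 2$ matrix in a Levi-orthonormal frame). The CR Gauss equation with flat target expresses the Chern--Moser tensor $S^M$ of $M$ as the trace-free part of the Hermitian square of $II_f$, hence as an explicit quadratic expression $Q^{\circ}[\sigma_f]$ in $\sigma_f$ alone; the same holds for $g_0$, so $Q^{\circ}[\sigma_f]=S^M=Q^{\circ}[\sigma_{g_0}]$ pointwise. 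Analyzing the quadratic map $\sigma\mapsto Q^{\circ}[\sigma]$ on symmetric $2\times 2$ matrices shows that, off the degeneracy locus of $S^M$, this confines $\sigma_f$ to an explicit low-dimensional algebraic family determined by $\sigma_{g_0}$.

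\smallskip

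Next I would invoke the Gauss--Codazzi--Ricci formalism: the pair $(II_f,\nabla^{\perp}_f)$ of CR second fundamental form and normal connection satisfies the integrability equations of a CR immersion into the flat $\bS^7$, and conversely the abstract CR structure of $M$ together with any compatible such pair determines $f$ up to an automorphism of $\bS^7$ (the fundamental theorem for CR immersions into spheres, in the spirit of \cite{EHZ04}, \cite{EHZ05}). Feeding the pointwise constraint of the previous paragraph into the Codazzi and Ricci equations turns the residual freedom in $(II_f,\nabla^{\perp}_f)$ into an overdetermined system, and the claim is that its solution set modulo $\Aut\bS^7$ has at most two elements. This is the exact analogue, for $\mu(M)=1$ and $n=2$, of the X.~Huang--S.~Ji classification of CR maps $\bS^{2n+1}\to\bS^{4n+1}$ in the limit codimension $N-n=n$ (\cite{HuangJi01}), where the two classes are the linear and the Whitney map, distinguished by Huang's geometric rank; here the corresponding rank invariant should take only the value producing $g_0$ and one further ``Whitney-type'' value (possibly not realized), the emptiness of the higher-rank stratum being exactly where $\mu(M)\le 1$ (i.e.\ $N-n=1$) enters, via the rank estimates of \cite{Huang99}, \cite{EHZ04}, \cite{EHZ05}. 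Once the number of equivalence classes of germs at $p$ is bounded by two, the statement follows by pigeonhole: among $f_1,f_2,f_3$ two lie in the same class, so $f_k=T\circ f_j$ for some $T\in\Aut\bS^7$ and $j\ne k$.

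\smallskip

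The main obstacle is precisely this last point: showing that the overdetermined Gauss--Codazzi--Ricci system has at most two solutions modulo $\Aut\bS^7$ and that the higher-rank stratum is empty. Both amount to re-running the normal-form normalization and order-by-order analysis of \cite{Huang99}, \cite{HuangJi01} in the genuinely curved setting, where the Chern--Moser normal form of $M$ contributes terms absent from the sphere-to-sphere case; the real work is to show these extra terms do not push the solution count past two. The same analysis should isolate, in terms of the algebraic type of $S^M$, those $M$ for which the ``Whitney-type'' class is absent and rigidity holds --- the supplementary result announced in the abstract.
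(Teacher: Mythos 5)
There is a genuine gap, on two counts. First, your argument only ever considers maps into $\bS^7$ (i.e.\ $N=3$), but the corollary quantifies over all $N$ with $N+\mu(M)\leq 4$. In particular, when $\mu(M)=0$ the critical case \eqref{codimension2} is $N=4$, i.e.\ non-constant CR maps of a locally spherical $M$ into $\bS^{9}\subset\bC^5$, which is codimension $N-n=n=2$; this case is not covered by Theorem \ref{Rigid1} nor by anything in your sketch, and in the paper it is handled by citing the Huang--Ji classification \cite{HuangJi01} (linear and Whitney maps are the only two classes). Your treatment of $\mu(M)=0$ via Theorem \ref{Rigid1} with ``$N-n+\mu(M)=1<2$'' silently assumes $N=3$ and so proves only part of the statement. (The subcritical cases $N=2$, and the vacuous range $N\leq 2$ when $\mu(M)=1$, are fine and follow from Theorem \ref{Rigid1} as you say.)

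Second, for the genuinely new case $\mu(M)=1$, $N=3$, the corollary is intended as an immediate consequence of Theorem \ref{Main} (equivalently Theorem \ref{Main1}(i)), proved in the body of the paper; you instead sketch an independent proof and explicitly leave its decisive step --- that the Gauss--Codazzi--Ricci system, or equivalently a Huang--Ji-type normal-form analysis ``in the curved setting,'' admits at most two solutions modulo $\Aut\bS^7$ --- as an acknowledged ``main obstacle.'' As written this is not a proof. Note also that the paper's actual mechanism is different from the one you propose: rather than re-running the order-by-order analysis of \cite{Huang99}, \cite{HuangJi01} (whose extension to non-spherical $M$ is exactly what is not available), it counts solutions of the pointwise Gauss equation \emph{including} the curvature term, reducing it to the linear-algebra statement that the $3\times 3$ matrix $T_A$ in \eqref{ta} can be of rank one and negative semidefinite for at most two choices of $A$ (Proposition \ref{Gauss-prop}), and then upgrades equality of second fundamental forms to $f=T\circ\tilde f$ via Theorem \ref{ehz61}. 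Your final pigeonhole step is correct, but it only becomes available after that count is actually established.
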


We shall in fact prove a more refined version of Theorem \ref{Main} in which CR curvature invariants of $M$ divide the hypersurfaces in Theorem \ref{Main} into two classes, one for which there is at most one mapping and one for which there are at most two (again, of course, up to composition with an automorphism with the target sphere). In order to formulate this result, we need to introduce some notation.

Let $M\subset \bC^{n+1}$ be a smooth, connected, strictly pseudoconvex hypersurface and $p\in M$. The CR (or pseudoconformal) curvature (the CR analogue of the Weyl curvature in Riemannian geometry) was introduced by Chern and Moser in \cite{CM74}. Let us choose a contact form $\theta$ on $M$ near $p$ such that the Levi form of $M$ is positive definite, and extend $\theta$  to an admissible (in the sense of Webster \cite{Webster78}) CR coframe $(\theta,\theta^1,\ldots,\theta^n, \theta^{\bar 1},\ldots,\theta^{\bar n})$, so that
\begin{equation}\label{Leviform}
d\theta=ig_{\alpha\bar\beta}\theta^\alpha\wedge\theta^{\bar\beta},
\end{equation}
where the matrix $(g_{\alpha\bar\beta})$ represents the Levi form of $M$; here and in what follows, we use the summation convention and let $\alpha,\beta,\ldots$ run over the index set $\{1,\ldots, n\}$.
Relative to this coframe, the CR curvature is then represented, near $p$, by a Hermitian curvature tensor (field)
\begin{equation}\label{CRcurvature}
S\colon T^{1,0}\times T^{1,0}M\times T^{1,0}\times T^{1,0}M \to \bC,
\end{equation}
where $T^{1,0}M$ denotes the bundle of $(1,0)$ vector fields on $M$. Equivalently, using the dual frame on $T^{1,0}M$, we can represent $S$ in tensor notation as $S=(S_{\alpha\bar\beta\nu\bar\mu})$. If we let $X$ denote the space of symmetric $(2,0)$ tensors on $M$ near $p$,
\begin{equation}\label{X}
X:=\{x_{\beta\mu}\theta^\beta\otimes\theta^\mu\colon x_{\beta\mu}=x_{\mu\beta}\},
\end{equation}
then $S$ defines a (pointwise) linear map $L_S\colon X\to X$ via (cf.\ \cite{KimOh09})
\begin{equation}\label{L-S}
L_S\colon x=(x_{\beta\mu})\mapsto (S_{\alpha}{}^{\beta}{}_{\nu}{}^{\mu}x_{\beta\mu}),
\end{equation}
where we use the Levi form $(g_{\alpha\bar\beta})$ and its inverse $(g^{\alpha\bar\beta})$ to lower and raise indices. The CR curvature in \eqref{CRcurvature}, as well as the Levi form, and consequently the linear map $L_S$ depend on the contact form $\theta$, but the rank and the sign of the trace of $L_S$ are CR invariants (as long as we require the Levi form with respect to $\theta$ to be positive definite); we shall use the notation $\Rk L_S$ and $\tr L_S$ for the rank and trace of $L_S$, respectively. We point out that $M$ is locally spherical (i.e. $\mu(M)=0$) if and only if the CR curvature $S$ is identically 0, which of course happens if and only if $\Rk(L_S)$ is identically 0. We should also point out that $\dim X=n(n+1)/2$; thus, $\dim X=3$ when $n=2$.

We shall prove the following result, of which part (i) is precisely Theorem \ref{Main}:

\begin{thm}\label{Main1} Let $M$ be a smooth, connected, strictly pseudoconvex hypersurface in $\bC^{3}$. Then the following hold:
\medskip

\noindent {\rm{\bf (i)}} There are at most two non-constant CR mappings into $\bS^{7}\subset \bC^4$ up to composition with an automorphism of $\bS^7$; i.e. if $f_1\colon M\to \bS^{7}$, $f_2\colon M\to \bS^{7}$, and $f_3\colon M\to \bS^{7}$ are non-constant CR mappings, then there are an automorphism $T$ of $\bS^7$ and $j\neq k$ with $j,k\in \{1,2,3\}$ such that $f_k=T\circ f_j$.
\smallskip

\noindent {\rm{\bf (ii)}} If there exists an open subset $U\subset M$ such that $\Rk L_S<2$ in $U$, or $\Rk L_S= 2$ and $\tr L_S < 0$ in $U$, then there is at most one non-constant CR mapping into $\bS^{7}\subset \bC^4$ up to composition with an automorphism of $\bS^7$; i.e.\ if $f_1\colon M\to \bS^{7}$ and $f_2\colon M\to \bS^{7}$ are non-constant CR mappings, then there is an automorphism $T$ of $\bS^7$ such that $f_2=T\circ f_1$.
\end{thm}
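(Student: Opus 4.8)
The plan is to reduce, in every case, to a pointwise algebraic problem for the second fundamental form of the mapping and then count its solutions.

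\emph{Reductions.} If $\mu(M)\ge 2$ there is no non-constant CR mapping $M\to \bS^7$ at all (such a mapping would force $\mu(M)\le 1$), so both assertions are vacuous. If $\mu(M)=0$, then $M$ is locally spherical, the codimension satisfies $N-n=1<2=n$, and Theorem \ref{Rigid1} already gives a \emph{unique} non-constant CR mapping up to an automorphism of $\bS^7$; since then $S\equiv 0$, so $\Rk L_S\equiv 0<2$, this is consistent with (and stronger than) both (i) and (ii). Note also that, $M$ being connected with real-analytic CR structure whenever $\mu(M)<\infty$, the condition ``$\Rk L_S<2$ on some open $U$'' already forces $\mu(M)=0$. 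Hence one may assume $\mu(M)=1$; then a non-constant CR mapping $f_0\colon M\to\bS^7$ exists, and the content of the theorem is the bound on the number of such mappings modulo $\Aut\bS^7$.

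\emph{Adapted frames and the Gauss equation.} Given any non-constant CR mapping $f\colon M\to\bS^7$ --- necessarily a transversal local CR embedding --- I would pull back the flat Chern--Moser connection of $\bS^7$ along $f$ and carry out the adaptation of frames as in \cite{EHZ04}, \cite{EHZ05}. Since the CR codimension of the embedding is $(N+1)-(n+1)=1$, this produces an admissible coframe on $M$ together with a \emph{single} second fundamental form $\sigma=(\sigma_{\alpha\beta})$, a symmetric $(2,0)$-tensor (so $\sigma$ is pointwise an element of the space $X$ of \eqref{X}), well defined up to a $U(1)$-factor; this residual $U(1)$ is exactly the freedom of composing $f$ with an automorphism of $\bS^7$ fixing the relevant point and normal line. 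The structure equations then yield a Gauss-type identity expressing the CR curvature $S$ of $M$ as a fixed quadratic expression $\mathcal G(\sigma)$ in $\sigma$, together with Codazzi-type equations for its covariant derivatives. The key structural point --- which in this borderline dimension one extracts from the Gauss--Codazzi--Bianchi system --- is that the whole jet of $f$ at a point is determined by $M$ and $\sigma$; equivalently, $f$ is determined, up to composition with an automorphism of $\bS^7$, by its second fundamental form $\sigma$. This reduces everything to bounding, pointwise, the number of $\sigma\in X$ (modulo the $U(1)$-phase) with $\mathcal G(\sigma)=S$.

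\emph{The algebraic count.} Since $\mathcal G(e^{i\vartheta}\sigma)=\mathcal G(\sigma)$, the Gauss map factors through the rank-one positive Hermitian form $P(\sigma):=\sigma\otimes\bar\sigma$ on $X$, and $\mathcal G$ is \emph{linear} in $P$; concretely $\mathcal G(\sigma)=S$ asserts that a fixed ``trace-part'' projection of $P(\sigma)$ equals a prescribed tensor built from $S$. Thus solutions $\sigma$ modulo phase correspond to points of the intersection of (a) an affine subspace of Hermitian forms on $X$ with prescribed image under that projection --- a coset of the $4$-real-dimensional space of pure-trace curvature tensors --- with (b) the $5$-real-dimensional cone of rank-$\le 1$ positive semidefinite Hermitian forms on $X\cong\bC^3$; the expected dimension of this intersection is $4+5-9=0$. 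To get assertion (i) I would parametrize the rank-$\le 1$ cone, impose the linear constraint, and reduce the problem to a single real quadratic equation in the one remaining parameter --- the point at which a Huang's-lemma type argument enters --- which has at most two roots. For assertion (ii) I would translate the hypotheses $\Rk L_S<2$, respectively $\Rk L_S=2$ and $\tr L_S<0$, into shape conditions on $S$ (equivalently on $P(\sigma_0)$, where $\sigma_0$ is the second fundamental form of $f_0$) and show that precisely in these regimes the quadratic equation either admits no second solution or has a double root at $\sigma_0$, so the affine subspace meets the cone only at $P(\sigma_0)$ and $f$ is unique up to $\Aut\bS^7$.

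\emph{Globalization and the main obstacle.} The pointwise count shows that on a dense open subset of $M$ there are at most two (respectively at most one) admissible germs of $\sigma$ up to phase; by real-analyticity of $M$ and of the CR invariants, together with unique continuation for CR mappings into spheres (in the spirit of the arguments behind Theorem \ref{Rigid1}), two mappings with the same $\sigma$ up to phase on an open set coincide up to an automorphism of $\bS^7$. In particular, given $f_1,f_2,f_3$ as in (i), at a generic point two of the associated second fundamental forms agree modulo phase, forcing the corresponding $f_j,f_k$ to be $\Aut\bS^7$-equivalent on an open set, hence everywhere. The step I expect to be the main obstacle is twofold: (a) organizing the adaptation of frames and the Gauss--Codazzi system in this exactly borderline dimension so that the \emph{only} free datum is $\sigma$ --- i.e., verifying that no extra moduli are hidden in the third- and higher-order jets of $f$ --- and (b) proving the sharp algebraic bound that an affine subspace of complementary dimension meets the rank-$\le 1$ positive Hermitian cone in at most two points, with the explicit criterion in terms of $\Rk L_S$ and $\tr L_S$ deciding when there is only one. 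Handling the positive-semidefiniteness constraint and the degenerate configurations of $S$ (notably the locus $\Rk L_S=2$) is where the real work lies.
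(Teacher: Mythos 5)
Your overall strategy coincides with the paper's: reduce to the codimension-one Gauss equation \eqref{Acond}, count admissible second fundamental forms up to a unimodular factor, use the machinery of \cite{EHZ04} (Theorem \ref{ehz61} together with the Section 8 argument there) to conclude that agreement of second fundamental forms up to phase forces $f=T\circ\tilde f$, and finish by pigeonhole and unique continuation, treating the locally spherical case separately via Theorem \ref{Rigid1}. The genuine gap is that the algebraic count itself --- which is the actual content of the theorem --- is not carried out; you defer it to ``the main obstacle.'' The dimension count $4+5-9=0$ only says the intersection of the affine family $\{-S(\zeta,\bar\zeta)-\zeta A\zeta^{\ast}\db\zeta\db^2\}$ with the rank-$\le 1$ positive semidefinite cone has expected dimension zero; it gives no bound at all on the number of intersection points, nor does it exclude positive-dimensional intersections in degenerate positions, and your assertion that everything reduces to ``a single real quadratic equation in one remaining parameter'' is unsubstantiated and does not match the actual structure. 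What is needed (and what the paper does) is: first normalize $S$ by a unitary change of $(\theta^\alpha)$ so that $c=S_{1\bar 2 1\bar 2}=0$ as in \eqref{abccond} (using that a holomorphic homogeneous quartic in $(p,\bar q)$ has zeros on the unit sphere); then observe (Lemma \ref{Alem1}) that \eqref{tacond} has a nonzero solution $\omega$ iff the explicit matrix $T_A$ of \eqref{ta} has rank one and is negative semidefinite; then run the case analysis: for $b\neq 0$ the rank-one condition forces $\sigma=\pm b$ with $\rho=-a$ (resp.\ $\tau=-a$), and for each sign the quadratic $\tau^2-4a\tau-5a^2-4|b|^2=0$ has two roots of which exactly one yields $T_A\le 0$ --- so the bound ``two'' comes from a discrete sign choice, one admissible solution per sign, not from two roots of one quadratic --- while for $b=0$ one gets at most one or two diagonal solutions according to the sign of $a=-\tr L_S$. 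This same case analysis is what produces the criterion in part (ii) ($\Rk L_S=2$, $\tr L_S<0$ corresponds exactly to $b=0$, $a>0$), which your sketch also leaves unproved; without Proposition \ref{Gauss-prop} or an equivalent, neither (i) nor (ii) follows.

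Two secondary points. First, the step ``$\sigma$ determines $f$ up to $\Aut(\bS^7)$'' is precisely Theorem \ref{ehz61}, which requires the second fundamental form to be nondegenerate --- in codimension one, nonvanishing; on the set where $\Rk L_S\ge 2$ this nonvanishing must be (and in the paper is) extracted from the Fischer-decomposition argument of Lemma \ref{Fischer}, and the unimodular ambiguity is absorbed concretely by the coframe change $\theta^3\mapsto h\theta^3$ rather than by an unproven ``jet determination'' principle. Second, your reduction remark that $\Rk L_S<2$ on an open set forces $\mu(M)=0$ globally relies on real-analyticity of $M$, which is not among the hypotheses and would itself require an argument (that embeddability into a sphere forces analyticity); it is also unnecessary, since one can argue locally on $U$ with Theorem \ref{Rigid1} and then use unique continuation of the CR mappings along the connected minimal hypersurface $M$.
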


A crucial ingredient in the proof of Theorem \ref{Main1} is a careful study of the Gauss equation, which for a transversal local CR embedding $f\colon M\to \bS^{2N+1}$ relates the CR curvature of $M$ to the CR second fundamental form of $f$. In proving rigidity results of the form given e.g.\ by Theorem \ref{Rigid1}, the corresponding step is typically dealt with in the following way: Assume there are two solutions to the Gauss equation. By subtracting the two Gauss equations, the curvature term is eliminated and, under suitable conditions, it is possible to show that the two solutions are equal (up to an inevitable, but natural action). This approach will not work if more than one solution is expected, unless the expected solutions are known (as in the case $M=S^{2n+1}$ and $N-n=n$, where the linear embedding and the Whitney map are both solutions) and an arbitrary solution is compared to one of the known solutions. A novelty in the present paper is that we study the number of solutions to the Gauss equation directly, including the curvature term. As a result, we are also able to identify a new class of strictly pseudoconvex hypersurfaces in $\bC^3$, in terms of their CR curvatures, whose embeddings into the sphere remain rigid in a larger codimensional range than that predicted by Theorem \ref{Rigid1}. This is part (ii) of Theorem \ref{Main1}.

\section{Basic Setup}
We shall use the setup from \cite{EHZ04} (see also \cite{BEH08} and \cite{ESh10}), to which the reader is referred for details. Let $M$ be a smooth, connected, strictly pseudoconvex hypersurface in $\bC^{n+1}$, so that $M$ is a CR manifold of CR dimension $n$ and real dimension $2n+1$.
We will work with $M$, locally near a given point $p\in M$, by fixing a choice of pseudohermitian structure (i.e.\ a contact form) $\theta \in T^c(M)^{\bot}$, where $T^c(M)$ is the complex tangent bundle of the CR manifold M. As in the introduction, we extend $\theta$ to an adapted CR coframe for the complexified tangent space of $M$, $(\theta, \theta^\alpha, \theta^{\bar{\beta}})_{\alpha,\beta=1}^n$; i.e.\  $(\theta, \theta^\alpha)$ is a coframe for the space of $(1,0)$ forms (a.k.a.\ sections of $(T^*)^{1,0}M:=(T^{0,1}M)^\perp$), $\theta^{\bar\beta}=\overline{\theta^\beta}$, and \eqref{Leviform} holds. The matrix $(g_{\alpha\bar\beta})_{\alpha,\beta=1}^n$ is a representative of the Levi form. For the purposes of this paper we shall require, as we may, that $g_{\alpha \bar{\beta}} = \delta_{\alpha \bar{\beta}}$. By a slight abuse of notation, we shall frequently refer to the collection $(\theta, \theta^\alpha)_{\alpha=1}^n$, which determines the full coframe, as the admissible coframe. For a fixed pseudohermitian structure $\theta$ and choice of Levi form, the 1-forms $\theta^\alpha$ in the coframe are determined up to unitary transformations.

Let $f:M \rightarrow \hat{M} $ be a transversal, local CR embedding of a strictly pseudoconvex, hypersurface $M\subset \mathbb{C}^{n+1}$ into another such $\hat{M}\subset \mathbb{C}^{N+1}$. Due to the strict pseudoconvexity of $M$ and $\hat M$, as mentioned in the introduction, $f$ is transversal and a local embedding if and only if $f$ is non-constant. By Corollary 4.2 in \cite{EHZ04}, given any admissible coframe $(\theta, \theta^\alpha )_{\alpha=1}^{n}$ as above, defined locally near $p\in M$ there exists an admissible coframe $( \hat{\theta}, \hat{\theta}^A )_{A=1}^{N}$ on $\hat{M}$, defined near $f(p) \in \hat{M}$, so that:
\begin{equation}
f^{\ast}(\hat{\theta},\: \hat{\theta}^\alpha, \: \hat{\theta}^a)=(\theta,\: \theta^\alpha,\: 0),
\label{adaptedcond}
\end{equation}
and such that $\hat g_{A\bar B}=\delta_{A\bar B}$. Here, and for the rest of this paper, we use the following convention: Lower case Greek letters vary from 1 to n, $\alpha, \beta,\ldots \in \{1,...,n\}$, capital Roman letters vary from 1 to $N$, $A,B\ldots \in \{1,...,N \}$, and lower case Roman letters vary in the normal direction $a,b,\ldots \in \{n+1,...,N \} $. We shall say that the coframe $( \hat{\theta}, \hat{\theta}^A )$ is adapted to $f$ with respect to $(\theta, \theta^\alpha )$ if it satisfies \eqref{adaptedcond} and the Levi form in this coframe is the identity matrix.

Choose an adapted coframe on $\hat{M}$. We will omit the "hats" from the adapted coframe from now on. The CR second fundamental form of $f$ is given by
\begin{equation}
\Pi_f = \omega_{\alpha \:\:\: \beta}^{\:\:\:a}\, \theta^\alpha \otimes \theta^\beta \otimes L_a,
\label{2fftensor}
\end{equation}
where $( T, L_A, L_{\bar B} )$ is a dual frame to the coframe $( \theta, \theta^A, \theta^{\bar B})$, and the $\omega_{\alpha \: \: \: \beta}^{\:\:\:a}$ are determined by
\begin{equation}
f^{\ast}\hat{\omega}_{\alpha}^{\:\:\:a} = \omega_{\alpha \: \: \: \beta}^{\:\:\:a} \theta^\beta.
\label{2ffcond}
\end{equation}
The coefficients $\omega_{\alpha}{}^a{}_{\beta}$ are symmetric in the lower indices:
$$
\omega_{\alpha \: \: \: \beta}^{\:\:\:a}=\omega_{\beta \: \: \: \alpha}^{\:\:\:a}
$$
The second fundamental form is said to be {\it nondegenerate} if the collection of vectors $(\omega_{\alpha}{}^{a}{}_\beta)_{a=n+1}^{N}\in \mathbb{C}^{N-n}$ spans $\mathbb{C}^{N-n}$ as we vary $\alpha$ and $\beta$ over their index set.

In the paper by Chern and Moser \cite{CM74}, a complete system of invariant forms (a parallelism)  $$(\omega, \omega^\alpha,\omega^{\bar\beta}, \phi, \phi_\beta^{\: \: \: \alpha}, \phi_{\bar{\beta}}^{\: \: \: \bar{\alpha}}, \phi^{\alpha}, \phi^{\bar{\alpha}}, \psi )$$ is constructed on a certain principle G-bundle over $M$. By choosing an admissible CR coframe we may pull these forms back to $M$ (with $(\omega,\omega^\alpha,\phi)$ pulling back to $(\theta,\theta^\alpha,0)$). Using an adapted coframe on $\hat M$, the same can be done for $\hat M$ and these forms can then be pulled back to $M$ via the CR mapping $f$. The following result from \cite{EHZ04} (Theorem 6.1 there) will be important for the conclusion of this paper.

\begin{thm} {\rm (\cite{EHZ04})}
Let $f:M \rightarrow \mathbb{S}$ be a CR-embedding of a strictly pseudoconvex CR-manifold M of dimension $2n+1$, $n \geq 2$, into the unit sphere $\mathbb{S} \subset \mathbb{C}^{N+1}$. Let $(\omega_{\alpha \: \: \: \beta}^{\:\:\:a})$ be the second fundamental form of the embedding f with respect to some coframe on $\hat{M}=\mathbb{S}$ adapted to the embedding f and an admissible CR coframe on $M$. If the second fundamental form is nondegenerate, then any other embedding $\tilde{f}:M \rightarrow \mathbb{S}$ having the same second fundamental form relative to some (other) coframe on $\mathbb{S}$ adapted to $\tilde{f}$ and the same coframe on $M$ (i.e.\ if $(\omega_{\alpha \: \: \: \beta}^{\:\:\:a})=(\tilde{\omega}_{\alpha \: \: \: \beta}^{\:\:\:a})$ and $\tilde{f}^{\ast}(\tilde{\theta},\tilde{\theta}^\alpha)=f^{\ast}\big(\hat{\theta},
\hat{\theta}^\alpha)$ ), then
\begin{equation}
\tilde{f}^{\ast}\bigg(\tilde{\hat{\phi}}_B^{\: \: \: A}, \tilde{\hat{\phi}}^{ A}, \tilde{\hat{\psi}}\bigg) = f^{\ast} \bigg(\hat{\phi}_B^{\: \: \: A}, \hat{\phi}^{A}, \hat{\psi}\bigg).
\end{equation}
\label{ehz61}
\end{thm}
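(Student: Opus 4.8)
The plan is to exploit the fact that $\bS$ is the flat model: its Chern--Moser curvature, and all the covariant derivatives of that curvature, vanish identically, so that --- in any coframe on $\bS$ adapted to $f$ --- the only free data carried by the pulled-back parallelism on $M$ are the admissible coframe $(\theta,\theta^\alpha)$ and the second fundamental form. The hypotheses fix $f^{\ast}(\hat\theta,\hat\theta^\alpha,\hat\theta^a)=(\theta,\theta^\alpha,0)$ and fix the tangential--normal block of the pulled-back connection, $f^{\ast}\hat\phi_\alpha{}^a=\omega_\alpha{}^a{}_\beta\,\theta^\beta$ (see \eqref{2ffcond}); the normal--tangential block is then the Levi-form adjoint of this. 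I would show that every remaining pulled-back form --- the tangential block $f^{\ast}\hat\phi_\beta{}^\alpha$, the normal block $f^{\ast}\hat\phi_b{}^a$, the forms $f^{\ast}\hat\phi^\alpha$ and $f^{\ast}\hat\phi^a$, and $f^{\ast}\hat\psi$ --- is determined, by a finite sequence of exterior differentiations of the known relations followed by applications of the Chern--Moser normalization conditions, as a universal expression in $(\theta,\theta^\alpha)$, the intrinsic geometry of $M$, the second fundamental form, and finitely many of its covariant derivatives. Since $f$ and $\tilde f$ share all of that data --- the coframe on $M$ by hypothesis, the second fundamental form by assumption, and covariant differentiation on $M$ being an intrinsic operation --- the corresponding pulled-back forms must coincide, which is the assertion. (Throughout, indices are split as in the convention fixed above, into a tangential block $\alpha,\beta,\dots$ and a normal block $a,b,\dots$; the hypothesis $n\ge 2$ guarantees that the CR curvature sits where the tensorial bookkeeping below requires it.)

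The tangential forms $f^{\ast}\hat\phi_\beta{}^\alpha$, $f^{\ast}\hat\phi^\alpha$, and the tangential part of $f^{\ast}\hat\psi$ I would treat via the Gauss equation and its companions. Pulling back the flat structure equations of $\bS$ and retaining only tangential indices, one finds that these forms, together with $(\theta,\theta^\alpha,0)$, satisfy on $M$ precisely the Chern--Moser structure equations of $M$ itself, with the intrinsic CR curvature $S$ of $M$ occupying the curvature slot --- this is exactly the content of the Gauss, Codazzi and Ricci equations of the embedding, by which the vanishing ambient curvature together with the quadratic second-fundamental-form terms recombine into $S$. By uniqueness of the Chern--Moser parallelism of $M$ relative to a given admissible coframe, these forms are therefore the intrinsic Chern--Moser forms of $(M,\theta,\theta^\alpha)$, hence the same for $f$ and for $\tilde f$. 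For the genuinely extrinsic forms: differentiating $f^{\ast}\hat\theta^a=0$ and substituting the flat structure equation for $\hat\theta^a$ collapses, using the symmetry $\omega_\alpha{}^a{}_\beta=\omega_\beta{}^a{}_\alpha$, to the statement that $f^{\ast}\hat\phi^a$ is a multiple of $\theta$; differentiating the relation $f^{\ast}\hat\phi_\alpha{}^a=\omega_\alpha{}^a{}_\beta\,\theta^\beta$ and substituting the flat structure equations produces the Codazzi identities, which equate an expression built from $(\theta,\theta^\alpha)$, the Webster connection of $M$, and the derivatives of the components $\omega_\alpha{}^a{}_\beta$ with the still-unknown forms $f^{\ast}\hat\phi_b{}^a$ and $f^{\ast}\hat\phi^a$ contracted against the second fundamental form. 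This is where nondegeneracy enters decisively: since the vectors $(\omega_\alpha{}^a{}_\beta)_a$ span $\bC^{N-n}$ as $\alpha,\beta$ vary, the Codazzi identities can be solved \emph{uniquely} for $f^{\ast}\hat\phi_b{}^a$ and $f^{\ast}\hat\phi^a$. One further differentiation, together with the trace normalization tying $\hat\psi$ to the lower-order forms, then pins down $f^{\ast}\hat\psi$ in the same manner.

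Running the identical computation for $\tilde f$, starting from $\tilde f^{\ast}(\tilde{\hat\theta},\tilde{\hat\theta}^\alpha)=f^{\ast}(\hat\theta,\hat\theta^\alpha)$ and $(\tilde\omega_\alpha{}^a{}_\beta)=(\omega_\alpha{}^a{}_\beta)$, yields the same universal formulas for all the forms, and therefore $\tilde f^{\ast}(\tilde{\hat\phi}_B{}^A,\tilde{\hat\phi}^A,\tilde{\hat\psi})=f^{\ast}(\hat\phi_B{}^A,\hat\phi^A,\hat\psi)$. The step I expect to be the real obstacle is the extrinsic computation just sketched: one must write out the Chern--Moser structure equations together with \emph{all} of their normalization and trace conditions carefully enough to verify that (i) differentiating the second-fundamental-form relation indeed yields a linear system in $f^{\ast}\hat\phi_b{}^a$ and $f^{\ast}\hat\phi^a$ whose coefficient matrix is governed precisely by $(\omega_\alpha{}^a{}_\beta)$, and that (ii) nondegeneracy is exactly the condition under which that system is uniquely solvable, with no residual freedom left once the normalizations are imposed.
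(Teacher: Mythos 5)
First, a point of reference: the paper itself gives no proof of this statement --- it is quoted from \cite{EHZ04} (Theorem 6.1 there) --- so your sketch has to be measured against that proof. Your overall plan is essentially the right skeleton and matches it: the pulled-back parallelism should be shown to be determined by the shared coframe $(\theta,\theta^\alpha)$ together with the second fundamental form and finitely many of its covariant derivatives; differentiating $f^{\ast}\hat\theta^a=0$ and using the symmetry $\omega_{\alpha}{}^a{}_{\beta}=\omega_{\beta}{}^a{}_{\alpha}$ does show $f^{\ast}\hat\phi^a$ is a multiple of $\theta$; and differentiating \eqref{2ffcond} yields Codazzi-type identities in which $f^{\ast}\hat\phi_b{}^{\:a}$ and $f^{\ast}\hat\phi^a$ appear contracted against $(\omega_{\alpha}{}^a{}_{\beta})$, so that nondegeneracy is exactly what solves for them uniquely, with $f^{\ast}\hat\psi$ pinned down afterwards. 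This is where nondegeneracy enters in \cite{EHZ04} as well.

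The genuine flaw is in your tangential step. You assert that, since the sphere is pseudoconformally flat, the forms $f^{\ast}\hat\phi_\beta{}^{\:\alpha}$, $f^{\ast}\hat\phi^\alpha$, $f^{\ast}\hat\psi$ satisfy the Chern--Moser structure equations of $M$ with the intrinsic curvature $S$ in the curvature slot, and hence coincide with the intrinsic parallelism of $(M,\theta,\theta^\alpha)$ ``by uniqueness.'' That is not correct: pulling back the flat structure equation for $d\hat\phi_\beta{}^{\:\alpha}$ produces, in the curvature slot, the term coming from $\hat\phi_\beta{}^{\:a}\wedge\hat\phi_a{}^{\:\alpha}$, i.e.\ a quadratic expression in the second fundamental form (the $-g_{a\bar b}\,\omega\,\bar\omega$ term of the Gauss equation), which is in general \emph{not} traceless; Chern--Moser uniqueness only applies to forms whose curvature satisfies the trace normalizations, so it cannot be invoked here. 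Indeed, if your identification held, the Gauss equation would read $S=-\sum_a|\omega^a(\zeta)|^2$ with no correction, contradicting the trace terms recorded in \eqref{Acond} and \eqref{Acomponents}. What is actually true --- and what \cite{EHZ04} establishes and then uses to prove the theorem --- is that $f^{\ast}\hat\phi_\beta{}^{\:\alpha}$, $f^{\ast}\hat\phi^\alpha$, $f^{\ast}\hat\psi$ equal the intrinsic Chern--Moser forms of $M$ \emph{plus explicit corrections built from the second fundamental form}; these corrections are precisely the origin of the matrix $A$ in the Gauss equation. Since the corrections depend only on the common coframe and the common $(\omega_{\alpha}{}^a{}_{\beta})$, the desired equality of the pullbacks for $f$ and $\tilde f$ still follows, so your argument is repairable; but the shortcut via uniqueness of the parallelism must be replaced by that explicit comparison.
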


\section{The CR Curvature Tensor}\label{Curv-sect}
Given an admissible CR coframe $(\theta,\theta^\alpha)$ on $M$, the CR (pseudoconformal) curvature tensor of $M$,
\begin{equation}\label{CRcurvature2}
S\colon T^{1,0}\times T^{1,0}M\times T^{1,0}\times T^{1,0}M \to \bC,
\end{equation}
is represented by a Hermitian, traceless, curvature tensor (field) $S = (S_{\alpha \bar{\beta} \nu \bar{\mu}})$ on $M$; i.e.\ the coefficients $S_{\alpha \bar{\beta} \nu \bar{\mu}}$ satisfy:
\begin{equation}
S_{\alpha \bar{\beta} \nu \bar{\mu}} = S_{\nu \bar{\beta} \alpha \bar{\mu}}=S_{\nu \bar{\mu} \alpha \bar{\beta} } =S_{\bar{\beta} \alpha \bar{\mu}  \nu } (:=\overline{S_{{\beta} \bar\alpha {\mu}  \bar\nu }}),
\label{symcond}
\end{equation}
and the trace condition
\begin{equation}
S_{\mu \: \: \alpha \bar{\beta}}^{\:\: \mu} = 0,
\label{tracecond}
\end{equation}
where, as mentioned in the introduction, we use the summation convention, and use the Levi form and its inverse to lower and raise indices. The sectional curvature is represented by the degree 4 homogeneous polynomial $S(\zeta, \bar{\zeta})$, for $\zeta \in \mathbb{C}^n$, defined as
\begin{equation}
S(\zeta, \bar{\zeta}) = S_{\alpha \bar{\beta} \mu \bar{\nu}} \zeta^\alpha \zeta^\mu \zeta^{\bar{\beta}} \zeta^{\bar{\nu}}.
\label{Spoly}
\end{equation}
Notice that the trace condition \eqref{tracecond} is equivalent to $\Delta S(\zeta,\bar{\zeta})=0$, where $\Delta:=(\partial_\alpha\partial^\alpha)/4$, with $\partial_\alpha:=\partial/\partial \zeta^\alpha$, is the Laplacian.

For the remainder of this paper we focus on the case $n=2$. Using the symmetry properties in \eqref{symcond} for the coefficients of the curvature tensor, we see that the polynomial in \eqref{Spoly} can be written as:
\begin{equation}
S(\zeta, \bar{\zeta}) = Z\left(
\begin{array}{ccc} S_{1\bar{1} 1 \bar{1}}& 2S_{1\bar{1} 1 \bar{2}} & S_{1\bar{2} 1 \bar{2}}\\
2S_{1\bar{1} 2 \bar{1}}& 4S_{1\bar{1} 2 \bar{2}}&2S_{1\bar{2} 2 \bar{2}}\\
S_{2\bar{1} 2 \bar{1}}&2S_{2\bar{1} 2 \bar{2}}&S_{2\bar{2} 2 \bar{2}}
\end{array} \right)
Z^\ast
\end{equation}
where $Z$ is the row vector of degree 2 monomials in $\zeta$; $Z=((\zeta^1)^2, \zeta^1\zeta^2,(\zeta^2)^2)$. Using the Hermitian condition in \eqref{symcond} and the trace condition \eqref{tracecond} we also note that the sectional curvature is of the form:
\begin{equation}\label{sectionalS}
S(\zeta, \bar{\zeta}) = Z\left(
\begin{array}{ccc} a& b & c\\
\bar{b}& -4a&-b\\
\bar{c}&-\bar{b}&a
\end{array} \right)
Z^\ast
\end{equation}
where $a := S_{1\bar{1} 1 \bar{1}}\in \bR$, $b:=2S_{1\bar{1} 1 \bar{2}}\in \bC$, and $c:=S_{1\bar{2} 1 \bar{2}}\in \bC$. For a fixed contact form $\theta$, the coefficients $S_{\alpha \bar{\beta} \mu \bar{\nu}}$ are determined up to a unitary transformation in $\zeta$ (or equivalently a unitary change of the $\theta^\alpha$). If we let $\tilde{\zeta} = \zeta U$ for some $U\in SU(2,\bC)$, i.e.\ $U$ of the form
\[
U= \left( \begin{array}{cc}p&q\\-\bar{q}&\bar{p} \end{array} \right)
\]
with $|p|^2+|q|^2=1$, then by substituting into \eqref{Spoly} and computing, we observe that the new matrix elements in \eqref{sectionalS} are given by
\begin{equation}
\begin{array}{lll}
\tilde{a}= (|p|^4 -4|pq|^2+|q|^4)a+pq(|q|^2-|p|^2)b+\bar{p}\bar{q}(|q|^2-|p|^2)\bar{b}+p^2q^2c+\bar{p}^2\bar{q}^2\bar{c}\\
\tilde{b}= 6p\bar{q}(|p|^2-|q|^2)a+p^2(|p|^2-3|q|^2)b+\bar{q}^2(|q|^2-3|p|^2)\bar{b}-2p^3qc+2\bar{p}\bar{q}^3\bar{c}\\
\tilde{c}= 6p^2\bar{q}^2a+2p^3\bar{q}b-2p\bar{q}^3\bar{b}+p^4c+\bar{q}^4\bar{c}.
\end{array}
\end{equation}
Notice that $\tilde{c}$ is a holomorphic, degree 4 homogeneous polynomial in the complex variables $p$ and $\bar{q}$. Since this polynomial vanishes at $(p,\bar q)=(0,0)$, its zero set meets the unit sphere $|p|^2+|\bar q|^2=1$. (Indeed, it meets the unit sphere along a "bouquet" of at most 4 circles.) Thus, after a unitary transformation in $\zeta$, we may assume that $c=0$ and the curvature tensor S is represented by:
\begin{equation}
S(\zeta, \bar{\zeta}) = Z\left(
\begin{array}{ccc} a& b & 0\\
\bar{b}& -4a&-b\\
0&-\bar{b}&a
\end{array} \right)
Z^\ast.
\label{abccond}
\end{equation}
If desired, one can further normalize the coefficient matrix of $S(\zeta,\bar \zeta)$ so that the coefficient $b$ becomes real and non-negative, which would leave only a finite number of $U\in SU(2,C)$ preserving this normalization in the generic case; however, this will not be needed in this paper and will not be pursued here.

Let us now turn to the induced linear map $L_S\colon X\to X$, given by \eqref{L-S}, using the notation introduced above. Recall that the space $X$ of symmetric $(2,0)$ tensors, given by \eqref{X}, has dimension $3$ in the situation studied here ($n=2$). If we identify $X$ with the space of complex, symmetric $2\times2$ matrices in the obvious way, and choose the following basis for this space,
\begin{equation}\label{Xbasis}
e_1:=\left(
\begin{array}{cc} 1& 0\\
0&0
\end{array} \right),\quad
e_2:=\left(
\begin{array}{cc} 0& 1\\
1&0
\end{array} \right),\quad
e_3:=\left(
\begin{array}{cc} 0& 0\\
0&1
\end{array} \right),
\end{equation}
then a straightforward calculation (left to the reader) reveals that in this basis $L_S$ is represented by the following matrix:
\begin{equation}\label{L-Smatrix}
L_S=\left(
\begin{array}{ccc} a& b & 0\\
b/2& -2a&-\bar b/2\\
0&-\bar{b}&0
\end{array} \right),
\end{equation}
where we have used that fact that $c$ has been normalized to be zero. We readily obtain the following:

\begin{prop}\label{L-Sprop} In an admissible CR coframe $(\theta,\theta^\alpha)$ for $M\subset \bC^3$ such that the sectional CR curvature has the normalized form \eqref{abccond}, the following holds:
\medskip

\noindent{\rm{\bf (i)}} $\tr L_S=-a$.
\smallskip

\noindent{\rm{\bf (ii)}} $a\neq 0$ and $b\neq 0$ $\iff$ $\Rk L_S=3$.
\smallskip

\noindent{\rm{\bf (iii)}} $a\neq 0$ and $b=0$ $\iff$ $\Rk L_S=2$ and $\tr L_S\neq 0$.
\smallskip

\noindent{\rm{\bf (iv)}} $a= 0$ and $b\neq 0$ $\iff$ $\Rk L_S=2$ and $\tr L_S=0$.
\smallskip

\noindent{\rm{\bf (v)}} $a= 0$ and $b=0$ $\iff$ $\Rk L_S<2$.
\end{prop}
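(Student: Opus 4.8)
The proof of Proposition~\ref{L-Sprop} is a purely linear-algebraic analysis of the explicit $3\times 3$ matrix in \eqref{L-Smatrix}, so the plan is essentially to compute its rank and trace as $a$ and $b$ range over $\bR$ and $\bC$ respectively. Part~(i) is immediate: reading off the diagonal of \eqref{L-Smatrix} gives $\tr L_S = a + (-2a) + 0 = -a$. For the remaining parts, first I would compute the determinant. Expanding along the third row of \eqref{L-Smatrix} gives
\begin{equation}\label{detLS}
\det L_S = -(-\bar b)\det\left(\begin{array}{cc} a & 0 \\ b/2 & -\bar b/2 \end{array}\right) = \bar b \cdot \left(-\frac{a\bar b}{2}\right) = -\frac{a|b|^2}{2}.
\end{equation}
Hence $\det L_S \neq 0$ precisely when $a\neq 0$ and $b\neq 0$, which already gives the forward and reverse implications of part~(ii): $\Rk L_S = 3$ iff $a\neq 0$ and $b\neq 0$.

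Next I would dispose of the rank-drop cases by examining $2\times 2$ minors. When $a=0$, the matrix becomes $\begin{pmatrix} 0 & b & 0 \\ b/2 & 0 & -\bar b/2 \\ 0 & -\bar b & 0\end{pmatrix}$; if moreover $b=0$ this is the zero matrix, so $\Rk L_S = 0 < 2$, while if $b\neq 0$ the top-left $2\times 2$ block has determinant $-b^2/2\neq 0$ so $\Rk L_S = 2$, and $\tr L_S = -a = 0$; this establishes parts~(iv) and~(v) (the latter also needs the case $a\neq 0$, $b=0$, handled next, to see that $\Rk L_S <2$ forces $a=0$). When $a\neq 0$ and $b=0$, the matrix is $\mathrm{diag}(a,-2a,0)$, which has rank $2$ with $\tr L_S = -a\neq 0$, giving part~(iii). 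Finally, to complete the reverse implications I would observe that the five cases $(a\neq 0, b\neq 0)$, $(a\neq 0, b=0)$, $(a=0, b\neq 0)$, $(a=0,b=0)$ exhaust all possibilities and the corresponding conclusions $\Rk L_S = 3$; $\Rk L_S = 2, \tr L_S \neq 0$; $\Rk L_S = 2, \tr L_S = 0$; $\Rk L_S = 0$ are mutually exclusive, so each equivalence in (ii)--(v) follows from the collection of forward implications just verified. (Note cases (iii) and (iv) together show $\Rk L_S$ never equals $1$ for this tensor, so ``$\Rk L_S < 2$'' in (v) is the same as ``$\Rk L_S = 0$''.)

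There is essentially no serious obstacle here; the only mild subtlety is bookkeeping, namely making sure the four cases partition the parameter space and that the stated right-hand conditions are pairwise incompatible, so that the one-directional computations assemble into genuine ``iff'' statements. The one computational input that has to be taken on faith from the earlier text is the derivation of the matrix \eqref{L-Smatrix} itself from the definition \eqref{L-S} of $L_S$ and the normalized curvature form \eqref{abccond}, which the authors explicitly leave to the reader; given that matrix, everything above is a short and direct verification.
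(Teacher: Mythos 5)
Your proposal is correct and follows essentially the same route as the paper, which simply reads off parts (i)--(v) from the explicit matrix \eqref{L-Smatrix} by computing its trace, determinant, and the relevant $2\times 2$ minors, then uses the mutual exclusivity of the four cases to upgrade the forward implications to equivalences. One cosmetic slip: expanding along the third row gives $\det L_S=-a\bar b^{2}/2$ rather than $-a|b|^{2}/2$ (the product $\bar b\cdot\bar b$ is $\bar b^{2}$, not $|b|^{2}$), but this does not affect the conclusion that $\det L_S\neq 0$ exactly when $a\neq 0$ and $b\neq 0$.
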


\section{The Gauss Equation}\label{Gauss-sect}
Suppose that $f:M\subset \mathbb{C}^{n+1} \rightarrow \mathbb{S} \subset \mathbb{C}^{N+1}$ is a transversal CR embedding (locally) of a strictly pseudoconvex CR hypersurface M into the sphere, and let us choose admissible and adapted coframes with respect to $f$ as described above. In \cite{EHZ04} the following CR Gauss Equation is established:
\begin{align*}
S_{\alpha \bar{\beta} \mu \bar{\nu}} = & - g_{a\bar{b}}\omega_{\alpha \: \: \: \mu}^{\:\:\:a}\omega_{\bar{\beta} \: \: \: \bar{\nu}}^{\:\:\:\bar{b}}+\frac{1}{n+2}(\omega_{\gamma \: \: \: \alpha}^{\:\:\:a}\omega_{\: \: \: a \bar{\beta}}^{\gamma}g_{\mu \bar{\nu}}+\omega_{\gamma \: \: \: \mu}^{\:\:\:a}\omega_{\: \: \: a \bar{\beta}}^{\gamma}g_{\alpha \bar{\nu}}   + \omega_{\gamma \: \: \: \alpha}^{\:\:\:a}\omega_{\: \: \: a \bar{\nu}}^{\gamma}g_{\mu \bar{\beta}}\\ &+ \omega_{\gamma \: \: \: \mu}^{\:\:\:a}\omega_{\: \: \: a \bar{\nu}}^{\gamma}g_{\alpha \bar{\beta}} )- \frac{\omega_{\gamma \: \: \: \delta}^{\:\:\:a}\omega_{\:\:\:a}^{\gamma \: \: \: \delta}}{(n+1)(n+2)}(g_{\alpha \bar{\beta}}g_{\mu \bar{\nu}}+g_{\alpha \bar{\nu}}g_{\mu \bar{\beta}}).
\end{align*}
Using the fact that the Levi form has been normalized to be the identity matrix, we obtain for the sectional CR curvature:
\begin{align*}
S(\zeta,\bar{\zeta}) &= & -\sum_{a}\omega_{\alpha \: \: \: \mu}^{\:\:\:a}\omega_{\bar{\beta}\: \: \: \bar{\nu}}^{\:\:\:\bar{a}}\zeta^{\alpha}\zeta^{\mu}\zeta^{\bar{\beta}}\zeta^{\bar{\nu}}+
\frac{4}{n+2}\omega_{\gamma \: \: \: \alpha}^{\:\:\:a}\omega_{\: \: \: a \bar{\beta}}^{\gamma} \zeta^{\alpha}\zeta^{\bar{\beta}}|\zeta|^2-\frac{2\omega_{\gamma \: \: \: \delta}^{\:\:\:a}\omega_{\:\:\:a}^{\gamma \: \: \: \delta}}{(n+1)(n+2)}|\zeta|^4 \\
&=& -\sum_{a}|\omega^a(\zeta)|^2 +\frac{1}{n+2}\left|\!\left|\frac{\partial \omega(\zeta)}{\partial\zeta}\right|\!\right|^2 \db\zeta\db^2-\frac{1}{2(n+1)(n+2)}\left|\!\left|\frac{\partial^2\omega(\zeta)}{\partial \zeta^2}\right|\!\right|^2\db\zeta\db^4
\end{align*}
where
\begin{equation}\label{SFF+}
\omega^a(\zeta):= \omega_{\alpha\:\:\:\beta}^{\:\:\:a}\zeta^\alpha \zeta^\beta
,\quad \db\zeta\db^2:=\sum_\alpha|\zeta^\alpha|^2.
\end{equation}
Here, we have also used the notation
$$
\left|\!\left|\frac{\partial \omega(\zeta)}{\partial\zeta}\right|\!\right|^2,\quad
\left|\!\left|\frac{\partial^2\omega(\zeta)}{\partial \zeta^2}\right|\!\right|^2
$$
for suitable norms (which will not be important in this paper) of the corresponding derivative tensors. For our purposes, we note that the Gauss Equation asserts that there exists an $n\times n$ Hermitian matrix $A$ such that:
\begin{equation}
S(\zeta,\bar{\zeta}) + \zeta A \zeta^\ast \db\zeta\db^2 = -\sum_{a}|\omega^a(\zeta)|^2.
\label{Acond}
\end{equation}
The Gauss Equation also specifies the matrix $A=(A_{\alpha \bar{\beta}})$ to be
\begin{equation}
A_{\alpha \bar{\beta}}= \sum_{a} \left(-\frac{4}{n+2} \omega_{\gamma \:\:\: \alpha}^{\:\:\:a}\omega_{ \:\:\: a \bar{\beta}}^{\gamma}+2\frac{\omega_{\gamma \:\:\: \delta}^{\:\:\:a}\omega_{ \:\:\: a }^{\gamma \:\:\: \delta}}{(n+1)(n+2)}\delta_{\alpha \bar{\beta}}\right),
\label{Acomponents}
\end{equation}
although this will not be important for our purposes in this paper.

We begin with the following observation.
\begin{lem}\label{Fischer}
Suppose equation \eqref{Acond} is satisfied with some harmonic polynomial $S(\zeta,\bar\zeta)$, some Hermitian $n\times n$ matrix $A$,  and some holomorphic polynomials $\omega^a(\zeta)$. Then, $\omega^a(\zeta)\equiv 0$, for all $a$, if and only if $S(\zeta,\bar\zeta)=0$.
\end{lem}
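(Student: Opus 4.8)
The plan is to exploit the orthogonality structure of the Fischer inner product on polynomials (equivalently, the fact that the decomposition of a bihomogeneous real polynomial into a ``harmonic'' part and a multiple of $\db\zeta\db^2$ is unique). One direction is trivial: if $S(\zeta,\bar\zeta)\equiv 0$, then \eqref{Acond} reads $\zeta A\zeta^\ast\,\db\zeta\db^2 = -\sum_a|\omega^a(\zeta)|^2$; evaluating the Laplacian $\Delta$ (or just comparing the bidegree $(2,2)$ pieces against their traces) forces both $A=0$ and each $\omega^a\equiv 0$, because a sum of squared moduli $\sum_a|\omega^a(\zeta)|^2$ that is divisible by $\db\zeta\db^2$ must vanish (the variety $\{\zeta:\omega^a(\zeta)=0\ \forall a\}$ would otherwise have to contain the hyperplane complement in a way incompatible with each $\omega^a$ being a \emph{quadratic} form unless they are all zero; more directly, set $A=0$ first, then $\sum_a|\omega^a|^2\equiv 0$ gives $\omega^a\equiv 0$). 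So the content is the forward direction.

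For the forward direction, suppose $\omega^a(\zeta)\equiv 0$ for every $a$. Then \eqref{Acond} becomes $S(\zeta,\bar\zeta) = -\zeta A\zeta^\ast\,\db\zeta\db^2$, i.e.\ the harmonic polynomial $S$ equals $-\big(\sum_{\alpha,\beta}A_{\alpha\bar\beta}\zeta^\alpha\zeta^{\bar\beta}\big)\big(\sum_\gamma|\zeta^\gamma|^2\big)$. Apply the Laplacian $\Delta=(\partial_\alpha\partial^\alpha)/4$ to both sides. The left side is zero by harmonicity of $S$ (the trace condition \eqref{tracecond}, equivalently $\Delta S=0$ as noted after \eqref{Spoly}). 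On the right side, a short computation gives $\Delta\big(\zeta A\zeta^\ast\,\db\zeta\db^2\big) = c_n\,\tr A$ for a positive dimensional constant $c_n$ (indeed $\Delta(\zeta A\zeta^\ast) = 0$ in the holomorphic/antiholomorphic sense used here unless paired, and the cross term produces a constant multiple of $\tr A$; the precise constant is $(n+1)\tr A$ up to the normalization of $\Delta$). Hence $\tr A=0$. But $A$ is Hermitian and, from \eqref{Acomponents} with all $\omega^a\equiv 0$, we in fact get $A=0$ directly; alternatively, feed $\tr A=0$ back: knowing $S=-\zeta A\zeta^\ast\db\zeta\db^2$ is itself a valid ``decomposition'' of the zero-trace part, and uniqueness of the Fischer/harmonic decomposition of the bidegree-$(2,2)$ polynomial $-\zeta A\zeta^\ast\db\zeta\db^2$ forces its harmonic part $S$ to be zero. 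Either way we conclude $S(\zeta,\bar\zeta)\equiv 0$.

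The cleanest route avoids even computing $A$ from \eqref{Acomponents}: a bihomogeneous polynomial of bidegree $(2,2)$ that is simultaneously harmonic (killed by $\Delta$) and divisible by $\db\zeta\db^2$ must be identically zero — this is the standard fact that the space of harmonic $(2,2)$ forms intersects trivially with $\db\zeta\db^2\cdot(\text{$(1,1)$ forms})$ in the orthogonal Fischer decomposition. Since $S=-\zeta A\zeta^\ast\db\zeta\db^2$ is both (harmonic by hypothesis, divisible by $\db\zeta\db^2$ by inspection), it vanishes. I expect the only mild subtlety is pinning down this orthogonality statement precisely in the bihomogeneous (as opposed to fully homogeneous) setting, but it follows immediately from applying $\Delta$ and induction on the degree, or by citing the Fischer decomposition for bigraded polynomials; there is no real obstacle here, and the whole lemma is a one-paragraph argument once the harmonic-times-$\db\zeta\db^2$ orthogonality is invoked.
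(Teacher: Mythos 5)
Your handling of the implication $\omega^a\equiv 0\Rightarrow S\equiv 0$ ends up exactly where the paper does: the authors simply invoke Fischer's theorem (every polynomial decomposes uniquely as a harmonic polynomial plus a multiple of $\db\zeta\db^2$), which is your ``cleanest route'' --- $S=-\zeta A\zeta^\ast\db\zeta\db^2$ is harmonic and divisible by $\db\zeta\db^2$, hence zero. Two caveats on your intermediate versions of this step: the identity $\Delta\bigl(\zeta A\zeta^\ast\db\zeta\db^2\bigr)=c_n\tr A$ is false --- applying $\Delta$ once to a bidegree-$(2,2)$ polynomial gives the $(1,1)$-polynomial $\tfrac14\bigl(\tr A\,\db\zeta\db^2+(n+2)\,\zeta A\zeta^\ast\bigr)$, not a constant --- and the appeal to \eqref{Acomponents} is outside the lemma's hypotheses, which allow an arbitrary Hermitian $A$. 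Neither matters for the route you finally adopt, which coincides with the paper's.

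The genuine gap is in the direction you dismiss as trivial, $S\equiv 0\Rightarrow\omega^a\equiv 0$. Your justification rests on the claim that a sum of squared moduli $\sum_a|\omega^a(\zeta)|^2$ divisible by $\db\zeta\db^2$ must vanish, together with ``the Laplacian forces $A=0$''; both fail. Take $n=2$, $S\equiv0$, $A=-I$, $\omega^1=(\zeta^1)^2$, $\omega^2=\sqrt{2}\,\zeta^1\zeta^2$, $\omega^3=(\zeta^2)^2$: then $\sum_a|\omega^a|^2=\db\zeta\db^4$ and \eqref{Acond} holds with nonzero $\omega^a$'s, so the divisibility claim is false as stated (and, read literally with no bound on the number of components, so is this direction of the lemma). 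Likewise, applying $\Delta$ to $\zeta A\zeta^\ast\db\zeta\db^2=-\sum_a|\omega^a|^2$ only yields $\tr A\,\db\zeta\db^2+(n+2)\,\zeta A\zeta^\ast=-\sum_{a,\mu}|\partial_\mu\omega^a|^2\le0$, i.e.\ $A\le 0$, not $A=0$; and one cannot ``set $A=0$ first.'' What makes the implication true in the paper's setting is precisely the restriction your argument never uses: there is a single normal component ($N-n=1<n=2$). Then $|\omega(\zeta)|^2=\zeta B\zeta^\ast\,\db\zeta\db^2$ with $B:=-A\geq0$ (this is where the paper's ``negative sign'' remark enters), and one concludes $\omega\equiv0$ either by polarizing and noting that the irreducible polynomial $\sum_\alpha\zeta^\alpha\bar w^\alpha$ cannot divide $\omega(\zeta)\overline{\omega(w)}$, or by a rank count: if $B\neq0$, after a unitary change of $\zeta$ the right-hand side contains $d\sum_\beta|\zeta^1\zeta^\beta|^2$ with $d>0$ and so has rank at least $n\geq2$ as a Hermitian form on quadratics, whereas $|\omega|^2$ has rank at most one. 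This is a codimension-restricted statement (a special case of Huang's lemma); the paper compresses it into one line, but your version, which invokes no bound on the number of $\omega^a$'s, proves a false statement and cannot be repaired as written.
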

\begin{proof} The "if" part is obvious due to the negative sign on the right in \eqref{Acond}. The converse is a direct consequence of a classical result due to E. Fischer (see e.g.\ \cite{Shapiro89}): Every polynomial can be uniquely decomposed as a sum of a harmonic polynomial and a multiple of $\db\zeta\db^2$.
\end{proof}

We now consider a CR embedding $f:M \rightarrow \mathbb{S}^7$ of a strictly pseudoconvex CR hypersurface $M\subset \mathbb{C}^3$ into the sphere $\mathbb{S}^7 \subset \mathbb{C}^4$. In this situation, we have $n=2$, $N=3$, and $N-n=1$. The CR second fundamental form of $f$ satisfies the Gauss Equation \eqref{Acond}, where the sum on the right has only one term ($a=3$). By Lemma \ref{Fischer}, the second fundamental form will be identically zero on an open subset $U\subset M$ if and only if the CR curvature is identically zero on $U$, or equivalently, $M$ is locally spherical on $U$. It follows from Theorem \ref{Rigid1} in this case  that $f$ equals $T\circ L\circ f_0$ on an open subset $V\subset U\subset M$, where $f_0$ is a local CR diffeomeorphism of $V$ onto an open piece of the sphere $\bS^{5}\subset \bC^{3}$ ($\mu(V)=0$ and $N-n=1<2=n$). Thus, if we have two non-constant CR mappings $f_1\colon M\to \bS^{7}$ and $f_2\colon M\to \bS^7$, then, after composition with yet another automorphism of the target sphere if necessary, they will agree on an open subset of $M$ and, therefore, will agree on all of $M$ by unique continuation of CR mappings along the connected, strictly pseudoconvex (or simply minimal) hypersurface $M$. Thus, the conclusion in Theorem \ref {Main1} (ii) in the case $\Rk L_S=0$ on $U$ follows. Moreover, by Proposition \ref{L-Sprop}, we have $\Rk L_S<2$ if and only if $\Rk L_S=0$ and hence, we conclude that the conclusion of Theorem \ref {Main1} (ii) holds if $\Rk L_S<2$ on an open subset $U\subset M$.

In view of the previous paragraph, in what follows we may assume that we are at a point where the CR curvature $S$ and the second fundamental form of $f$ do not vanish; i.e.\ we may assume that $ab\neq 0$ and the single component $\omega^3(\zeta)\not\equiv 0$. Now, given a Hermitian $2\times 2$ matrix
\begin{equation}\label{Amatrix}
A= \left( \begin{array}{cc} \tau & \eta \\ \bar{\eta} & \rho \end{array} \right),\quad \tau,\rho\in \bR,\ \eta\in\bC,
\end{equation}
a straightforward calculation shows that we may rewrite $\zeta A\zeta^*\db\zeta\db^2$ in the form $Z\tilde A Z^*$, where $\tilde A$ is the Hermitian $3\times 3$ matrix given by
$$
\tilde A:=\left(
\begin{array}{ccc} \tau & \sigma & 0\\
\bar{\sigma}& \tau + \rho &\sigma\\
0&\bar{\sigma}&\rho
\end{array}
\right).
$$
Thus, by combining equations \eqref{abccond} and \eqref{Acond}, we conclude that there exists a $2\times 2$ Hermitian matrix $A$ of the form \eqref{Amatrix} associated to the embedding $f$ such that:
\begin{equation}\label{temp1}
Z
\left[
\left(
\begin{array}{ccc} a& b & 0\\
\bar{b}& -4a&-b\\
0&-\bar{b}&a
\end{array} \right)
+
\left(
\begin{array}{ccc} \tau & \sigma & 0\\
\bar{\sigma}& \tau + \rho &\sigma\\
0&\bar{\sigma}&\rho
\end{array}
\right)
\right]
Z^\ast
= -|\omega(\zeta)|^2
\end{equation}
where we use the notation $\omega(\zeta)$ for the single component $\omega^3(\zeta)$, defined in \eqref{SFF+} with $a=3$, and $Z$, as above, is the row vector $Z=((\zeta^1)^2,\zeta^1\zeta^2,(\zeta^2)^2)$.
Let us introduce the notation $T_A$ for the sum of the two $3\times3$ matrices appearing in \eqref{temp1},
\begin{equation}
T_A := \left(
\begin{array}{ccc} \tau + a&\sigma+ b & 0\\
\bar{\sigma}+\bar{b}&\tau+\rho -4a&\sigma-b\\
0&\bar{\sigma}-\bar{b}&\rho +a
\end{array} \right).
\label{ta}
\end{equation}
Thus, we conclude that if $M$ has a (local) CR embedding into the sphere $\mathbb{S}^7$ then there exists a Hermitian $2 \times 2$ matrix $A$ such that
\begin{equation}
Z T_A Z^\ast = -|\omega(\zeta)|^2
\label{tacond}
\end{equation}
where $\omega(\zeta)$, a degree 2 homogeneous polynomial in $\zeta$, represents the second fundamental form of $f$.

Now, recall that the curvature tensor S (i.e.\ the coefficients $a,b$) is fixed. We will show that there are at most 2 distinct matrices A, with corresponding degree 2 homogeneous polynomials $\omega(\zeta)$, which satisfy equation \eqref{tacond}. It is easy to see that if two pairs $(A,\omega(\zeta))$ and $(\tilde A,\tilde \omega(\zeta))$ satisfy \eqref{tacond}, then $A=\tilde A$ if and only if $|\omega(\zeta)|^2\equiv |\tilde \omega(\zeta)|^2$ (or, equivalently, $\omega(\zeta)\equiv e^{it}\tilde\omega(\zeta)$ for some $t\in \bR$). Thus, counting matrices $A$ that solve \eqref{tacond} for some $\omega(\zeta)$ is equivalent to counting (potential) second fundamental forms $\omega(\zeta)$, up to the equivalence relation
\begin{equation}\label{SFFeq}
\omega(\zeta)\sim \tilde\omega(\zeta) \iff \exists t\in\bR\colon \omega(\zeta)\equiv e^{it}\tilde\omega(\zeta),
\end{equation}
that satisfy \eqref{tacond} for some $A$. Using this, we will then conclude the proof of Theorem \ref{Main1} in the next section. We begin with the following:
\begin{lem}
Given a $2 \times 2$ Hermitian matrix A, the equation \eqref{tacond} is satisfied for some homogeneous degree $2$ polynomial $\omega(\zeta)\not\equiv 0$ if and only if $\Rk(T_A)=1$ and $T_A \leq 0$.
\label{Alem1}
\end{lem}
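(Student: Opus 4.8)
The plan is to characterize when the equation $Z T_A Z^\ast = -|\omega(\zeta)|^2$ admits a nonzero solution $\omega(\zeta)$. The key observation is that $Z T_A Z^\ast$ is a Hermitian form in the three monomials $(\zeta^1)^2, \zeta^1\zeta^2, (\zeta^2)^2$ collected in the row vector $Z$, while $-|\omega(\zeta)|^2 = -|\omega_{11}(\zeta^1)^2 + \omega_{12}\zeta^1\zeta^2\sqrt{?} + \dots|^2$ — more precisely, writing $\omega(\zeta) = w_1(\zeta^1)^2 + w_2\zeta^1\zeta^2 + w_3(\zeta^2)^2 = Z w^\ast$ for a suitable column vector $w$ (where the middle entry absorbs the symmetry factor), we have $-|\omega(\zeta)|^2 = -Z w^\ast w Z^\ast = Z(-w^\ast w)Z^\ast$. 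The matrix $-w^\ast w$ is a negative semidefinite Hermitian matrix of rank $\le 1$, and it has rank exactly $1$ iff $w\neq 0$, i.e.\ iff $\omega(\zeta)\not\equiv 0$.

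First I would make this identification rigorous: expand $\omega(\zeta) = \omega_{\alpha}{}^{3}{}_{\beta}\zeta^\alpha\zeta^\beta$ and match coefficients of the monomials to define the column vector $w$; note this is a bijection between nonzero degree-$2$ homogeneous polynomials $\omega(\zeta)$ and nonzero vectors $w\in\bC^3$. Then I would invoke the uniqueness in Fischer's decomposition (or simply linear independence of the monomials $(\zeta^1)^4, (\zeta^1)^3\zeta^2, \ldots, (\zeta^2)^4$ appearing when one expands $Z M Z^\ast$ for a $3\times 3$ Hermitian $M$) to conclude that the identity $Z T_A Z^\ast = Z(-w^\ast w)Z^\ast$ as polynomials in $\zeta$ forces the equality of the Hermitian matrices $T_A = -w^\ast w$. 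The one subtlety here is that the map $M \mapsto Z M Z^\ast$ from $3\times 3$ Hermitian matrices to (real-valued) degree-$(2,2)$ polynomials in $\zeta$ is injective; this follows because the products $Z_i \bar Z_j$ of distinct monomials are linearly independent over $\bR$ (after accounting for the Hermitian symmetry $M = M^\ast$), which is a direct and routine check.

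With this identification in hand, the equivalence in Lemma~\ref{Alem1} is immediate: $\eqref{tacond}$ has a nonzero solution $\omega(\zeta)$ $\iff$ $T_A = -w^\ast w$ for some $w\neq 0$ $\iff$ $T_A$ is Hermitian, negative semidefinite (i.e.\ $T_A\le 0$), and of rank exactly $1$. Indeed, any negative semidefinite Hermitian matrix of rank $1$ can be written as $-w^\ast w$ with $w\neq 0$ (spectral decomposition: the single negative eigenvalue $-|w|^2$ with unit eigenvector $w/|w|$), and conversely $-w^\ast w$ is visibly negative semidefinite of rank $1$ when $w\neq 0$.

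I expect the only real content — the part requiring care rather than mechanical verification — to be the injectivity of $M \mapsto Z M Z^\ast$, i.e.\ justifying that matching the polynomial identity $\eqref{tacond}$ is equivalent to matching $3\times 3$ Hermitian matrices; everything else is a translation of the rank-one negative-semidefinite condition. Since the paper has already observed (in the text preceding Lemma~\ref{Alem1}) that $A = \tilde A \iff |\omega(\zeta)|^2 \equiv |\tilde\omega(\zeta)|^2$, which is exactly this injectivity statement restricted to differences, I can either cite that observation or give the short direct argument via linear independence of the monomials $(\zeta^1)^{4-k-\ell}(\zeta^2)^k\overline{(\zeta^1)^{4-k-\ell'}(\zeta^2)^{\ell'}}$ and their conjugates.
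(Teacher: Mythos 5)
Your proposal is correct and follows essentially the same route as the paper: the paper also writes $\omega(\zeta)=Zv$, identifies $|\omega(\zeta)|^2=Z(vv^\ast)Z^\ast$ and concludes $T_A=-vv^\ast$ (hence rank $1$ and $T_A\le 0$), and proves the converse by the same spectral decomposition $T_A=U\,\mathrm{diag}(-\kappa^2,0,0)\,U^\ast$ with $\omega(\zeta)=\kappa Zu_1$. The only difference is that you spell out the injectivity of $M\mapsto ZMZ^\ast$ on Hermitian matrices, which the paper passes over with ``clearly''; your justification of that step is a correct and routine check.
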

\begin{proof}
Suppose \eqref{tacond} is satisfied with $\omega(\zeta)\not\equiv 0$. Observe that we may express $\omega(\zeta)$ in the form $Zv$, where $Z$ is as above and $v\neq 0$ is a column vector in $\bC^3$, and hence we obtain
\begin{equation}
|\omega(\zeta)|^2 = Z W Z^\ast \:,\: \: \: W=vv^*
\end{equation}
Clearly, \eqref{tacond} implies $T_A = - W$. By construction, W is a positive semidefinite, rank 1 matrix, which proves the "only if" part.

Now suppose $\Rk(T_A)=1$ and $T_A \leq 0$. Since $T_A$ is Hermitian this implies there exists a unitary $3 \times 3$ matrix $U= (u_1 | u_2 |u_3)$ (where the $u_j$ are the columns of $U$), so that:
\begin{equation}
T_A=U \left( \begin{array}{ccc} -\kappa^2 &0&0\\0&0&0\\0&0&0 \end{array} \right) U^\ast
\end{equation}
and, hence,
\begin{equation}
ZT_A Z^\ast = - |\kappa Zu_1|^2.
\end{equation}
Thus, equation \eqref{tacond} is satisfied with
\[
\omega(\zeta)= \kappa Zu_1.
\]
\end{proof}

For fixed $a\in \bR$, $b\in \bC$ (with $ab\neq 0$), we shall now determine for which values of $\tau, \rho,\in \bR$ and $\sigma\in \bC$ the matrix $T_A$ in \eqref{ta} has rank 1 and is negative semidefinite.  We will break the possibilities for $a,b$ up into cases. \medskip

\noindent
{\bf Case 1: $a\neq0$, $b=0$.}
To obtain $\Rk T_A=1$, we clearly must have $\sigma=0$ and we are left with:
\begin{equation}
T_A = \left(
\begin{array}{ccc} \tau + a&0 & 0\\
0&\tau+\rho -4a&0\\
0&0&\rho +a
\end{array} \right)
\end{equation}
Now, $\Rk T_A=1$ can only be achieved with $\tau=-a, \rho=-a$ or $\tau=-a, \rho=5a$ or $\tau=5a, \rho=-a$. This leaves three possibilities for $T_A$:
\begin{equation}
T_A \in \left \{
\left(
\begin{array}{ccc} 0&0 & 0\\
0& -6a&0\\
0&0&0
\end{array} \right)
,
\left(
\begin{array}{ccc} 0&0 & 0\\
0& 0&0\\
0&0&6a
\end{array} \right)
,
\left(
\begin{array}{ccc} 6a&0 & 0\\
0& 0&0\\
0&0&0
\end{array} \right)
\right \}
\end{equation}
We conclude that there is at most one choice of $\tau,\rho,\sigma$ such that $T_A$ has rank 1 and is negative semidefinite if $a>0$, and at most two choices if $a<0$.
\medskip

\noindent
{\bf Case 2: $b\neq 0$.}
A moments reflection reveals that if $b\neq 0$ and $\Rk T_A=1$, then either $\sigma=b$ or $\sigma=-b$. If $\sigma=b$, then we must also have $\rho=-a$ and thus
\begin{equation}
T_A = \left(
\begin{array}{ccc} \tau + a&2b & 0\\
2\bar{b}&\tau-5a&0\\
0&0&0
\end{array} \right).
\end{equation}
The rank 1 condition for $T_A$ then implies $\tau^2-4a\tau-5a^2-4|b|^2 = 0$ or, equivalently, $\tau_{\pm}=2a\pm\sqrt{9a^2+4|b|^2}$. Let $\lambda=\lambda_{\pm}$ denote the non-zero eigenvalue of $T_A$ with $\rho=-a$ and $\tau=\tau_{\pm}$. A straightforward calculation shows that $\lambda_\pm = \mp \sqrt{9a^2+4|b|^2}$ and, hence, with $\sigma =b$ there is only one choice of $\tau, \rho$ for which $T_A$ has rank 1 and is negative semidefinite.

The choice $\sigma=-b$ is symmetric to the one where $\sigma=b$, with the roles of $\tau$ and $\rho$ reversed. Again we conclude that with $\sigma=-b$ there is only one choice for $\tau, \rho$ for which $T_A$ has rank 1 and is negative semidefinite.

Thus, when $b\neq0$ we conclude that there are precisely two choices of $\tau, \rho, \sigma$ for which $T_A$ has rank 1 and is negative semidefinite.
\medskip

We can summarize the analysis above in the following proposition.

\begin{prop}\label{Gauss-prop} Let $(\theta,\theta^\alpha)$ be an admissible CR coframe for $M\subset \bC^3$ such that the sectional CR curvature $S(\zeta,\bar \zeta)$ has the normalized form \eqref{abccond}. Consider the Gauss Equation for the second fundamental forms $\omega(\zeta)$ of CR mappings $f\colon M\subset \bS^7\subset\bC^4$,
\begin{equation}
S(\zeta,\bar{\zeta}) + \zeta A \zeta^\ast \db\zeta\db^2 = -|\omega(\zeta)|^2,
\label{Acond57}
\end{equation}
where $A$ is any Hermitian $2\times2$ matrix. Let $L_S\colon X\to X$ be the linear map, given by \eqref{L-S}, associated to the CR curvature. The following holds:
\medskip

\noindent{\rm{\bf (i)}} If $\Rk L_S<2$, then $S(\zeta,\bar\zeta)\equiv 0$, and $\omega(\zeta)\equiv0$ is the only solution to \eqref{Acond57}.
\smallskip

\noindent{\rm{\bf (ii)}} If $\Rk L=2$ and $\tr L_S <0$, then there is precisely one solution $\omega(\zeta)$ to \eqref{Acond57}, up to the equivalence given by \eqref{SFFeq}.
\smallskip

\noindent{\rm{\bf (iii)}} If $\Rk L_S=3$ or $\Rk L=2$ and $\tr L_S \geq 0$, then there are precisely two solutions $\omega(\zeta)$ to \eqref{Acond57}, up to the equivalence given by \eqref{SFFeq}.
\end{prop}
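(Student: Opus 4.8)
The plan is to prove Proposition \ref{Gauss-prop} by translating the Gauss Equation \eqref{Acond57} into the matrix equation \eqref{tacond}, and then counting, for fixed normalized curvature data $a,b$, the triples $(\tau,\rho,\sigma)$ for which the Hermitian $3\times 3$ matrix $T_A$ in \eqref{ta} satisfies $\Rk T_A = 1$ and $T_A\le 0$. By Lemma \ref{Alem1}, such triples are in bijection with solutions $\omega(\zeta)\not\equiv 0$ of \eqref{tacond} up to the equivalence \eqref{SFFeq}, so once the count of admissible $T_A$ is established, the proposition follows; the connection between the $a,b$ cases and the invariants $\Rk L_S$, $\tr L_S$ is supplied by Proposition \ref{L-Sprop}.

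First I would dispose of part (i): if $\Rk L_S<2$, then $a=b=0$ by Proposition \ref{L-Sprop}(v), so $S(\zeta,\bar\zeta)\equiv 0$, and by Lemma \ref{Fischer} (applied with $n=2$) the only solution of \eqref{Acond57} is $\omega(\zeta)\equiv 0$. For parts (ii) and (iii) I would split into the two cases already laid out in the excerpt. In Case 1 ($a\ne 0$, $b=0$, which by Proposition \ref{L-Sprop}(iii) means $\Rk L_S=2$, $\tr L_S = -a\ne 0$): forcing $\Rk T_A=1$ on the tridiagonal matrix immediately requires $\sigma=0$, leaving a diagonal matrix $\mathrm{diag}(\tau+a,\ \tau+\rho-4a,\ \rho+a)$; rank one forces exactly two of the three diagonal entries to vanish, yielding the three candidate matrices $\mathrm{diag}(0,-6a,0)$, $\mathrm{diag}(0,0,6a)$, $\mathrm{diag}(6a,0,0)$. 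Exactly one of these is negative semidefinite when $a>0$ and exactly two when $a<0$; since $\tr L_S = -a$, this is precisely one solution when $\tr L_S<0$ and two when $\tr L_S>0$, matching (ii) and (iii). In Case 2 ($b\ne 0$, i.e.\ $\Rk L_S=3$ if $a\ne 0$ by \ref{L-Sprop}(ii) or $\Rk L_S=2$, $\tr L_S = 0$ if $a=0$ by \ref{L-Sprop}(iv)): the $(1,2)$ and $(2,3)$ entries of $T_A$ are $\sigma+b$ and $\sigma-b$, and a rank-one $3\times3$ Hermitian matrix cannot have two off-diagonal entries in "generic position"; I would argue that $\Rk T_A=1$ forces one of these to vanish, i.e.\ $\sigma=\pm b$. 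Taking $\sigma=b$ kills the third column and row (so $\rho+a=0$, i.e.\ $\rho=-a$, and the bottom row vanishes), reducing to the $2\times2$ block $\begin{pmatrix}\tau+a & 2b\\ 2\bar b & \tau-5a\end{pmatrix}$; its determinant vanishes iff $\tau = 2a\pm\sqrt{9a^2+4|b|^2}$, and the corresponding nonzero eigenvalue works out to $\mp\sqrt{9a^2+4|b|^2}$, so exactly one of the two roots gives $T_A\le 0$. The case $\sigma=-b$ is the mirror image with $\tau\leftrightarrow\rho$ and likewise gives one admissible $T_A$, for a total of two; this is exactly part (iii) in the remaining subcases.

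The main obstacle I anticipate is the rank-one argument in Case 2 — specifically, justifying cleanly that $\Rk T_A=1$ together with $b\ne0$ forces $\sigma\in\{b,-b\}$, rather than just asserting it follows from "a moment's reflection." The clean way is to note that a Hermitian matrix of rank $1$ has all $2\times2$ minors equal to zero; applying this to the minor formed by rows $\{1,2\}$ and columns $\{2,3\}$ gives $(\sigma+b)(\bar\sigma-\bar b) = 0$ (using that the $(1,3)$ and $(2,3)$... more carefully, the minor on rows $1,3$ and columns $2,3$, or a suitably chosen minor, forces $(\sigma-b)(\text{something}) = 0$), and one checks that $\sigma+b$ and $\sigma-b$ cannot both be nonzero without creating a rank-$2$ submatrix. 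A second, smaller obstacle is the bookkeeping verifying that the nonzero eigenvalue has the claimed sign in Case 2 and that the boundary case $\tr L_S = 0$ (i.e.\ $a=0$, $b\ne0$) indeed falls under (iii) with two solutions — this is consistent since $\sigma=b$ and $\sigma=-b$ remain genuinely distinct when $a=0$, giving $T_A\ne \tilde T_A$. I would also remark that the matrices $T_A$ being distinct forces the associated $A$ (hence $\omega(\zeta)$ up to \eqref{SFFeq}) to be distinct, as noted in the text preceding Lemma \ref{Alem1}, which is what makes "number of admissible $T_A$" equal to "number of solutions $\omega(\zeta)$ up to \eqref{SFFeq}."
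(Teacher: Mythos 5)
Your proposal is correct and takes essentially the same route as the paper's own proof: reduction of \eqref{Acond57} to \eqref{tacond}, Lemma \ref{Alem1}, the case split $b=0$ versus $b\neq 0$ for the rank-one, negative semidefinite analysis of $T_A$, and translation into rank/trace conditions via Proposition \ref{L-Sprop}. Two small bookkeeping notes: the clean way to get $\sigma=\pm b$ in Case 2 is the $2\times 2$ minor of $T_A$ on rows $\{1,2\}$ and columns $\{2,3\}$, which equals $(\sigma+b)(\sigma-b)$ (not $(\sigma+b)(\bar\sigma-\bar b)$) and must vanish when $\Rk T_A=1$; and the nonzero eigenvalue in Case 2 is the trace $2\tau_\pm-4a=\pm 2\sqrt{9a^2+4|b|^2}$ (the sign and factor you quote, following the paper, are off, though the conclusion that exactly one root gives $T_A\le 0$ is unaffected).
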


\section{Proof of Theorem \ref{Main1}}

Suppose  $f,\tilde{f}:M\subset \mathbb{C}^3 \rightarrow \mathbb{S}^7 \subset \mathbb{C}^3$ are two non-constant CR mappings. As mentioned already in the introduction, it is well known (using the Hopf boundary point lemma and an identity relating the Levi forms of $M$ and $\bS^7$) that both $f$ and $\tilde f$ are transversal, local embeddings of $M$ into the sphere $\bS^7$. By the unique continuation property of CR functions along the connected, strictly pseudoconvex hypersurface $M$ (any CR function extends smoothly and holomorphically to a connected, open subset of $\bC^3$ with $M$ in its boundary), it suffices to show that there exists an automorphism $T$ of $\bS^7$ such that $f=T\circ \tilde f$ locally near some point $p\in M$ to conclude that $f=T\circ \tilde f$ holds on $M$. We shall pick a point $p\in U\subset M$ and an open neighborhood $V\subset U$ of $p$ such that the rank of $L_S$ is constant in $V$. Moreover, if this rank is equal to 2, then we shall further require that either $\tr L_S\equiv 0$ in $V$, or $\tr L_S$ has constant sign in $V$.

Let us choose an admissible CR coframe $( \theta, \theta^\alpha)$ for $M$ in $V$ such that the CR curvature has the normalized form \eqref{abccond}.
Let $(\theta, \theta^A )$ and $(\tilde{\theta}, \tilde{\theta}^A)$ be admissible coframes on $\mathbb{S}^7$ defined on open neighborhoods of $f(p)$ and $\tilde{f}(p)$ and adapted to $f$ and $\tilde{f}$, respectively, with respect to the chosen coframe $(\theta, \theta^\alpha)$ on $M$. (Here, we may need to shrink $V$.) Assume now that the respective second fundamental forms $\omega(\zeta)$ and $\tilde{\omega}(\zeta)$ of $f$ and $\tilde f$ at $q$, for every $q\in V$, satisfy
\begin{equation}\label{SFFeqh}
\tilde{\omega}(\zeta) \equiv h(q) \omega(\zeta),
\end{equation}
where $h\colon V \to \bC$ is some unimodular function. (We remark here that the dependence on $q$ in the second fundamental forms $\omega(\zeta)$ and $\tilde{\omega}(\zeta)$ has been suppressed in the notation.) If we make the following unitary change of admissible coframe adapted to $f$, $\theta^3\mapsto h\theta^3$ (or, equivalently, for the dual frame $L_3\mapsto L^3/h$), then relative to this new coframe the second fundamental form of $f$ satisfies
$$
(\omega_{\alpha \: \: \: \beta}^{\:\:\:a})=(\tilde{\omega}_{\alpha \: \: \: \beta}^{\:\:\:a}).
$$
We note that in the situation considered here ($N-n=1$) the second fundamental form $(\omega_{\alpha}{}^a{}_{\beta})$ is nondegenerate precisely when it is not identically zero. Let us proceed under the additional assumption that $(\omega_{\alpha}{}^a{}_{\beta})$ is not zero in $V$. We may apply Theorem \ref{ehz61} (a.k.a.\ Theorem 6.1 from \cite{EHZ04}) to conclude
\begin{equation}
\tilde{f}^{\ast}(\tilde{\phi}_B^{\: \: \: A}, \tilde{\phi}^{ A}, \tilde{\psi}) = f^{\ast} (\phi_B^{\: \: \: A}, \phi^{A}, \psi),
\label{pullbacks}
\end{equation}
where these forms are the Chern-Moser forms first pulled back to $\hat{M}$ via the choices of adapted coframes on $\hat{M}$ near $f(p)$ and $\tilde{f}(p)$, respectively, and then pulled back to $M$ via $f$ and $\tilde f$. We now proceed exactly as in Section 8 of \cite{EHZ04} (see also Lemma (1.1) in \cite{Webster79}) to conclude that there is an automorphism of the sphere, $T\in \Aut(\mathbb{S}^7)$, such that
\begin{equation}\label{fequal}
f=T\circ \tilde{f}
\end{equation}
in $V$ and, hence, on $M$. Thus, given $f$ and $\tilde f$ as above, the identity \eqref{SFFeqh} implies \eqref{fequal}, provided $(\omega_{\alpha}{}^a{}_{\beta})$ is not zero in $V$.

We shall now complete the proof of Theorem \ref{Main1}. The proof of part (ii) in the case where $\Rk L_S<2$ is given in the paragraph following Lemma \ref{Fischer} above. If $\Rk L_S=2$ and $\tr L_S<0$ in $V$, then Proposition \ref{Gauss-prop} implies that the second fundamental forms of $f_1$ and $f_2$, with $f_1=f$ and $f_2=\tilde f$, satisfy \eqref{SFFeqh} and do not vanish in $V$. Hence, the conclusion of Theorem \ref{Main} (ii) follows also in this case (by the arguments in the preceding paragraph). Now, to prove part (i) of Theorem \ref{Main1}, it clearly suffices to assume that $\Rk L_S\geq 2$ (since part (ii) has been proved). Given $f_1$, $f_2$, and $f_3$ as in part (i), Lemma \ref{Fischer} implies that there are $j\neq k$ with $j,k\in\{1,2,3\}$ such that the second fundamental forms of $f_j$ and $f_k$, with $f_j=f$ and $f_k=\tilde f$, satisfy \eqref{SFFeqh} and do not vanish in $V$. Again, the conclusion of Theorem \ref{Main1} (i) follows as above.
\qed

\section{Examples and Concluding Remarks}\label{Examples}

As mentioned in the introduction, there are well known examples showing that Corollary \ref{Main0} is sharp in the case where $M$ is the sphere (or spherical, $\mu(M)=0$), namely the linear embedding and the Whitney map. When $M$ is not spherical, however, constructing inequivalent mappings in the critical codimension where rigidity potentially breaks down has turned out to be quite elusive so far, and the authors do not have any examples in the setting of Theorem \ref{Main} where there are two inequivalent mappings. (Of course, by using inequivalent mappings between spheres, it is easy to construct inequivalent mappings for general $M$ when the codimension is sufficiently high.) We conclude this paper by recording a couple of examples of CR embeddings $M\subset\bC^3\to \bS^7$ where $\mu(M)=1$. These examples were both originally given by Webster in \cite{Webster78b}.

\begin{ex} {\rm Suppose $M\subset \bC^3$ is a real ellipsoid given by the equation: \[M = \{ z \in \mathbb{C}^3 \colon \db z\db^2 + b(z)+\overline{b(z)}-1 = 0 \}\] where $b(z)= b_{\alpha \beta} z^\alpha z^\beta$ is a homogeneous degree two holomorphic polynomial in $z$. Then it is easy to verify that the map $f:M\to \mathbb{S}^7  \subset \mathbb{C}^4$ given by
\begin{equation}
f(z_1,z_2,z_3) = \frac{1}{1-b(z)} (z_1,z_2,z_3,b(z))
\end{equation}
sends $M$ into the sphere $\mathbb{S}^7$.
}
\end{ex}

\begin{ex} {\rm  More generally, any real hypersurface $M \subset \mathbb{C}^n$ with a defining equation of the form \[M = \{ z \in \mathbb{C}^n \colon |z|^2 + b(z) + \overline{b(z)} -1 \equiv 0 \}, \] with $b(z)$ holomorphic in $\bC^n$, may be embedded into the sphere $\mathbb{S}^{2n+1} \subset \mathbb{C}^{n+1}$ via the map:
\[
f(z) = \frac{1}{1-b(z)} (z_1,z_2,z_3,...,z_n,b(z))
\]
}
\end{ex}

\def\cprime{$'$}

\end{document}